\def\smallddots{\mathinner{\raise7pt\hbox{.}\raise4pt\hbox{.}\raise1pt\hbox{.}}} 
\def\smallsdots{\mathinner{\raise1pt\hbox{.}\raise4pt\hbox{.}\raise7pt\hbox{.}}}
\DeclareMathOperator{\diag}{diag}
\DeclareMathOperator{\prob}{Prob}
\DeclareMathOperator{\rank}{rank}
\newcommand{\eqid}{\stackrel{d}{=}} 
\newtheorem{theorem}{Theorem}[section] 
\newtheorem{outline}{Outline}[section]
\numberwithin{equation}{section}
\numberwithin{table}{section}
\newtheorem{lemma}{Lemma}[section]
\newtheorem{corollary}{Corollary}[section]
\newtheorem{algorithm}{Algorithm}[section]
\newtheorem{example}{Example}[section]
\newtheorem{definition}{Definition}[section]
\newtheorem{assumption}{Assumption}[section]
\newtheorem{remark}{Remark}[section]
\begin{document} 
 
\title{
CUR Low Rank
Approximation of a Matrix
at Sublinear Cost} 
\author{Victor Y. Pan} 
\author{Soo Go$^{[2],[a]}$,
Qi Luan$^{[2],[b]}$, Victor Y. Pan$^{[1, 2],[c]}$, 
John Svadlenka$^{[2],[d]}$, Liang Zhao$^{[1, 2],[e]}$
\\\\
$^{[1]}$ Department of Computer Science \\
Lehman College of the City University of New York \\
Bronx, NY 10468 USA \\
$^{[2]}$ Ph.D. Programs in  Computer Science and Mathematics \\
The Graduate Center of the City University of New York \\
New York, NY 10016 USA \\
$^{[a]}$ sgo@gradcenter.cuny.edu\\
$^{[b]}$ qi\_luan@yahoo.com \\ 
$^{[c]}$ victor.pan@lehman.cuny.edu \\ 
http://comet.lehman.cuny.edu/vpan/  \\
$^{[d]}$jsvadlenka@gradcenter.cuny.edu \\ 
$^{[e]}$ Liang.Zhao1@lehman.cuny.edu \\
} 
\date{}

\maketitle

  
\begin{abstract}    
Low rank approximation of a matrix (hereafter {\em LRA}) is a highly important area of  Numerical Linear and Multilinear Algebra and Data Mining and Analysis. One can operate  with an LRA  at sublinear cost -- by using much fewer memory cells and flops than an input matrix $M$ has entries.\footnote{``Flop"  stands for ``floating point arithmetic  operation".}  For worst case inputs one cannot
compute even a reasonably close LRA  at sublinear cost, but in computational practice  accurate LRAs, even in their memory efficient form of CUR LRAs, are routinely obtained  at sublinear cost for large and important classes of matrices, in particular by means of Cross-Approximation iterations, which specialize
Alternating Direction techniques to LRA. We identify some classes of matrices
for which CUR LRA are computed at sublinear cost as well as some   sublinear cost LRA algorithms that are empirically accurate for 
large classes of inputs. Some of our techniques
and concepts can be of independent interests.
\end{abstract}
 

\paragraph{\bf Key Words:}
Low-rank approximation (LRA), 
CUR LRA,
Sublinear cost,
Cross-Approximation
iterations, 
Random sketching.


\paragraph{\bf 2020 Math. Subject  Classification:}
65Y20, 65F55, 68Q25, 68W20

\bigskip
\medskip


  

\section{Introduction}\label{sintr} 


{\bf 1.1. LRA at sublinear cost:  background and a challenge.}
LRA of a matrix is among the most fundamental problems of 
Numerical Linear and Multilinear Algebra and Data  Mining and Analysis, with  
  applications ranging from machine
learning theory and neural networks
to term document data and DNA SNP data (see surveys \cite{HMT11,M11,KS17}). 
For example, matrices that represent 
  Big Data (e.g., unfolding matrices of multidimensional tensors)
tend to be so immense that  only a tiny fraction of the entries
fits primary memory of a computer, but  
   quite typically  they admit LRA \cite{UT19},\footnote{Here and throughout we use such concepts as ``low", ``small", ``nearby", etc.  defined in context.} 
that is, lie close to  low rank matrices,
with which one can operate 
at  sublinear cost. 

Given an $m\times n$
 matrix $M$ and a {\em target rank} \footnote{ ``In practice, the target rank is rarely known in advance. $\dots$ LRA algorithms are
usually implemented in an adaptive fashion $\dots$
until the error norm satisfies the desired tolerance"
\cite[Sec. 2.4]{TYUC17}.} $\rho$, a very simple algorithm  of Sec. \ref{scyn} (we call it 
{\em Primitive})
computes rank-$\rho$ 
approximation\footnote{This means ``approximation having  
rank at most $\rho$".} of $M$ by a matrix product $CUR$ 
where  $R$ and $C$ denote two submatrices made up
of $k$ rows and $l$
columns of $M$, respectively, for a fixed pair of integers $k$ and $l$ such that
$\rho\le \min\{k,l\}$.
The algorithm uses
{\em sublinear memory space}, that is, much fewer
than $mn$ scalars,
if $kl\ll mn$ and runs at sublinear cost  if $kl\min\{k,l\}\ll mn$ (see (\ref{eqcnncur})). 
 
CUR preserves sparsity and non-negativity of  an input matrix; it is a memory  efficient LRA, widely applied in data analysis.
 Its output  accuracy  crucially depends on the choice of 
  the row and columns sets or equivalently  the $k\times l$ submatrix $G$ shared by the matrices $R$ and $C$ and said to be {\em CUR generator}.
Zamarashkin and Osinsky proved in   
\cite{ZO18}
that for any $m\times n$ matrix $M$, any positive integer $k\le\min\{m,n\}$, and some
$k\times k$ CUR generator  
$G$, the  Primitive algorithm  optimizes, up to a factor of $k+1$, approximation 
of $M$  under the Frobenius matrix norm,\footnote{By extending \cite{ZO18}, Cortinovis and
 Kressner \cite{CK20}   computed such a CUR LRA by using $O(kmn^3)$ ops for $m\ge n$.}
whereas  the error norms of CUR LRAs of $M$ are unbounded 
 over ill-conditioned generators $G$
  and more generally  any reasonably close LRA {\em fails
miserably}
on 
the worst case inputs and even on the small matrix families of Example \ref{exdlt} unless
all $mn$ entries of $M$ are involved.

Nevertheless, computation of accurate LRAs at sublinear cost, in particular by means of
{\em Cross-Approximation (C-A)} iterations, which adjust  the ADI  celebrated method \cite{S16}
 to LRA 
(see  Secs. 1.4 and \ref
{scurlra}),
routinely succeeds empirically for a large and important  class of
matrices. This raises 
 {\bf two Challenges} that we meet here.
 \medskip
 
{\bf 1.} Characterize matrices
for which 
the Primitive algorithm outputs  meaningful  CUR LRAs.
 
{\bf 2.} Specify some LRA algorithms   
that run at sublinear cost or at least use sublinear memory space
 and are empirically  accurate 
 for large classes of matrices. 
 \medskip

\noindent  {\bf 1.2. Accurate CUR LRA for  large input  classes.} In Thm. \ref{thm:error_estimate}
we estimate the {\em spectral  norm}  $||E||$ of the output error matrix $E=M-CUR$
 of the Primitive algorithm   
for rank-$\rho$, namely,
we deduced that

\begin{equation}\label{eqerrcur}
||M-CUR||<(3.3\frac{v+1}{1-\theta} +2)(v+1)\epsilon
 \end{equation}
 for the CUR  of rank $\rho\le\min\{k,l\}$ output by the Primitive algorithm provided that 
 
\begin{equation}\label{eqtheps1}
v:=||U||\max\{||C||,||R||\},~\theta:=\epsilon ||U|| < 1,
\end{equation}

\begin{equation}\label{eqeps1}
\min_
{\rank(M')\le \rho}||M'-M||\le \epsilon.
\end{equation}

For $\theta< 1/2$, say,  upper bound
(\ref{eqerrcur}) on $||E||$
exceeds its 
optimal value 
by quite a large factor
 of $(6.6(v+1)+2)(v+1)$ but can be interesting qualitatively -- it implies that the Primitive algorithm 
 does not fail where the generators $G$ are well-conditioned, while it  tends to fail   
otherwise.

We  strengthen    bound (\ref{eqerrcur}) 
where
 $M$  has a gap between its $\rho$th and $(\rho+1)$st largest singular values (see bound (\ref{eqgap})) 
  and is a small norm perturbation  of a  {\em factor-Gaussian matrix} 
 \begin{equation}\label{eqn:fgdfn}
    M':= H_1 \Sigma H_2,
\end{equation}
for  a $\rho\times \rho$
 well-conditioned matrix\footnote{Without loss of generality, {\em wlog}
  (see Remark \ref{repr3}),  we can assume that $\Sigma$ is a diagonal matrix,  banded with rows or columns filled with 0s where $m\neq n$.} $\Sigma$  of small full rank $\rho$ and  two independent   {\em Gaussian random} matrices
  $H_1$ and $H_2$, 
  filled with independent identically distributed    
 normal  (standard Gaussian) random variables. 
In that case we deduce that, under the assumptions of 
Thm. \ref{thm:error_estimate} 
for $\theta=1/2$, the CUR approximation
output by the Primitive algorithm has a spectral error norm 
\begin{equation}\label{eqerrnrm}
||E||:=||M-CUR||=O(\epsilon\cdot \max\{\frac{m}{k},\frac{n}{l}\})
\end{equation}
with a probability at least 0.6.

The latter bound only  applies
to a  
 narrow class of matrices $M$ because of the gap assumption  of (\ref{eqgap}), 
but the concept of factor-Gaussian matrices
can be of independent interest.
 We obtain (\ref{eqn:fgdfn})
by replacing   the  factors of SVD of $M'$ filled with left and right singular vectors by Gaussian random matrices  $H_1$ and  $H_2$, respectively;  
 $H_1$ and  $H_2$ are skinny 
and hence 
 close to matrices with
orthonormal columns \cite{RV09}
for  $\rho\ll \min\{m,n\}$. 
 By virtue of  Thm. \ref{thquasi},
{\em  Gaussian pre-processing}, that is,  pre- and/or post-multiplication by Gaussian random matrices, turns  any rank-$\rho$ matrix
into a  
 factor-Gaussian matrix of                                                                                                                                                                                                                                                                                                                                                                                                                                                                                                                                                                                                                                                                                                                                                                                                                                                                                                                                                                                                                                                                                                                                                                                                                                                                                                                                                                                                                                                                                                                                                                                                                                                                                                                                                                                                                                                                                                                                                                                                                                                                                                                                                                                                                                                                                                                                                                  expected rank $\rho$ and  hence turns  any matrix
that admits close rank-$\rho$
approximation into a small norm perturbation of such a factor-Gaussian matrix.
At the very end of Sec.  \ref{spreprmlt} 
we point out a link of Gaussian pre-processing to
random sketching LRA.
$~$

\noindent {\bf 1.3.
Acceleration of near-optimal LRA
algorithms.}
(i) Various random sketching LRA algorithms 
 are {\em  expected 
to output near-optimal LRAs}, that is,  output them with a high probability
{\em (whp)} 
(see \cite{HMT11,
M11,TYUC17,
MT20,
TWa}, and the bibliography therein). These algorithms   
run at sublinear cost apart from their stage of Gaussian pre-processing,
which runs at superlinear cost. 

That stage runs at sublinear cost as well where
Gaussian
multipliers are replaced with proper Ultrasparse 
multipliers
 such as Abridged 
SRHT matrices of Appendix \ref{ssrht}.
Then the output
 LRAs are computed at sublinear cost, are not expected to be near-optimal anymore, and
 empirically can be poor approximations \cite{L09}, 
 but   tend to be quite accurate 
for  a large   class of matrices $M$ and proper choice of
Ultrasparse
multipliers, according to formal and   empirical
study in
\cite{PLSZb}.

(ii) The random sampling algorithms of \cite{DMM08}
compute CUR LRA expected to be near-optimal 
under the Frobenius norm. 
Applied 
 to skinny matrices involved into C-A iterations  for LRA, these algorithms run  at 
 sublinear cost.  So do also the  entire hybrid  CUR  algorithms, combining C-A iterations with random sampling of \cite{DMM08}, provided that the iterations converge fast,
which is quite usually observed
empirically.
 In the tests  for various  real
world matrices
(see our Tables \ref{tabcadmm1} and \ref{tabcadmm}, reproduced from  
\cite{PLSZa}),
the output accuracy of the resulting LRA was consistently within  small factors from that of \cite{DMM08},
expected to be near-optimal. 
 \smallskip
   
\noindent {\bf 1.4. Related work.} 
Our study was largely motivated by  empirical  efficiency of C-A  (alternating direction) iterations for computing accurate CUR LRA at sublinear cost (see  \cite{T96,GZT95,GZT97,
GTZ97,T00,B00,GT01,BR03,
BG06,GOSTZ10,B11,
GT11,OZ18,O18,ALS24}, and the bibliography therein).
This  has led us                                                       to the two
Challenges of Sec. 1.1, responded in the papers   \cite{PLSZ16,
PLSZ17,PLSZa,
PLSZb,PLSZ20,
LP20,PLa,
LPSa}, which cite \cite{PQY15,PZ17a,
PZ17b} as their technical predecessors. Our current work elaborates upon the unpublished
results of \cite{PLSZa}, which extended 
\cite{PLSZ16,
PLSZ17}. 
 
Sublinear cost algorithms, called superfast,  have been studied extensively for
Toeplitz, Hankel, Vandermonde, Cauchy, and other structured matrices having small displacement rank  and defined by small number of parameters
(see \cite{P01,
XXG12,P15}
and
extensive bibliography therein).
More recently, randomized LRA algorithms
running at sublinear cost
have been proposed in
\cite{MW17,
BW18,
CETW23}  for some special but large and important classes of matrices
defined by large numbers of independent parameters. Most notably, the authors of  \cite{MW17,
CETW23} proved that their algorithms are expected
to output near-optimal LRAs for Symmetric Positive
Semidefinite (SPSD) matrices.\footnote{The sublinear cost of the   LRA algorithms of \cite{MW17,
BW18,CETW23}  does not include the
 superlinear cost of a posteriori  estimation of  their output error norms and correctness
verification, but the  deterministic algorithm of \cite[Part III]{LP20}, running at sublinear cost, computes LRA of an
$n\times n$ SPSD matrix
with both spectral and Frobenius error norms within a factor of $n$ from optimal and as by-product,  at no additional cost, estimates
error norm, verifying correctness.}

\cite[Alg. 2.2]{LYMHY} is a superfast heuristic  
CUR LRA algorithm, which is accurate for a large class of  unsymmetric
matrices
according to the test results in \cite{LYMHY}.  The algorithm
combines uniform random choice of some subsets of the column and row sets for CUR with their updating
by means of the Interpolative 
Decomposition  of \cite[Sec. 3.2.3]{HMT11}
  based on 
Strong Rank Revealing (SRR) QR  factorization of  \cite{GE96}.

Jianlin Xia in \cite{X24}
calls this
technique
``progressive
alternating direction pivoting", combines it with some additional
techniques, in particular,
randomized
error estimation,  and   consistently obtains highly  accurate  LRAs for  real world inputs in his extensive numerical
tests. 

In spite of impressive results of numerical tests,
 the heuristic
sublinear cost algorithms
of \cite{LYMHY,X24}
as well as
randomized sublinear cost estimation of  their accuracy   
in \cite{X24} must fail, e.g., for
the matrices
  of our Example \ref{exdlt}.

Xia cites
\cite{LP20,
PLSZ20}
as his close predecessors,
but his algorithms, 
 as well as \cite[Alg. 2.2]{LYMHY}, are quite different from our current ones -- e.g., he only uses randomization for error estimates, and  his randomization techniques  have no overlap with  ours.
 
In \cite{CD13}
 Chiu and Demanet achieve important 
 progress in formal support of the accuracy of LRA algorithms running at sublinear cost. They
prove that under the
 uniform random choice of the sets
 of  row and column indexes defining a
 $q\times q$ CUR generator,  
 the resulting rank-$r$ approximation of $M$
 is expected to be quite accurate provided that  
$M\approx XYZ^T$, for  
$X\in \mathbb R^{m\times r}$,
$Y\in \mathbb R^{r\times r}$,
$Z^T\in \mathbb R^{r\times n}$, $q$ a little exceeding $r$,
 and  the matrices $X$ and $Z$ having orthonormal columns and  incoherent, that is, filled with entries of
 comparable magnitude
(see \cite[Def. 1.1]{CD13} for formal definition). 
If only $X$ has   orthonormal columns and is incoherent,
then  Chiu and Demanet
define  CUR by performing a single C-A step based on \cite{GE96}  and then still prove 
similar  accuracy estimate  for that CUR LRA.

For a limitation,  their basic provision
of incoherence
 does not hold for a large class of inputs and  cannot  be verified at sublinear cost.

By applying  novel advanced
techniques,  Cortinovis and Ying  \cite{CY25} deduced the results of \cite{CD13}
under  weaker assumptions
about the factor matrices
$X$ and $Z$ -- by allowing some of the column vectors to be sparse rather than incoherent;
they proved  this for the algorithm of \cite{CD13} that incorporates progressive
alternating direction pivoting  of  \cite{LYMHY,X24}  
 into the algorithm of \cite{CD13}. The admirable progress in \cite{CY25} still shares the cited limitation of  \cite{CD13}.

 \cite{CD13,
CY25} characterize the classes of matrices    
 whose
LRA can be computed at sublinear cost but, unlike us,  do this in terms of
 incoherence
 and sparsity,
and their techniques are
different  from ours.
 
 \smallskip
 
\noindent {\bf 1.5.
 Organization of our paper.} 
   We  recall some  background material
   in the next four sections. 
    In Secs. \ref{scgrbckg}
 and \ref{serrrnd} we estimate  output errors of canonical CUR LRA of general matrices and perturbed random
    matrices, respectively. 
  In Sec.  \ref{spreprmlt} we prove that  Gaussian pre-processing  turns  any rank-$\rho$ matrix
into   
a  factor-Gaussian matrix of expected rank $\rho$.
 In Sec. \ref{sexpr} we cover our numerical experiments.
 In Appendix  \ref{shrdin}
    we specify some 
   small families of matrices whose LRA fails
unless all input entries are involved.
In Appendix
\ref{ssrht}  we define 
Abridged SRHT  Ultrasparse
 matrices. 
 In Appendix \ref{svlmfct} we estimate the
volume of a factor-Gaussian matrix.

  \section{Some background for LRA}\label{sbckgr}
  
    $\mathbb R^{p\times q}$  denotes the class of $p\times q$ 
  real  matrices. 
For simplicity  we assume  dealing
with real matrices 
throughout,\footnote{Hence the Hermitian transpose $M^*$
is just the transpose $M^T$.} but our study can be extended to complex matrices; in particular 
 see \cite{E88,CD05,ES05,TYUC17} for some relevant results about complex Gaussian matrices.

  In this section our notation  $|\cdot|$ unifies  the spectral norm  $||\cdot||$ and the Frobenius norm
  $||\cdot||_F$.
 
 Given a   tolerance $\epsilon$ to an output error norm, 
an $m\times n$ matrix $M$
 has $\epsilon$-rank at most $\rho$
 if it admits approximation within  $\epsilon$
 by a matrix $M'$ of rank at most $\rho$
 or equivalently if  there exist three matrices $A$, $B$, and $E$ such that
\begin{equation}\label{eqlra}
M=M'+E~{\rm where}~|E|\le \epsilon 
|M|,~M'=AB,~A\in \mathbb R^{m\times \rho},~{\rm and}~B
\in \mathbb R^{\rho\times n}.
\end{equation}
$M'=AB$ is a two-factor  LRA if $\rho$ is small in context. 

\bigskip

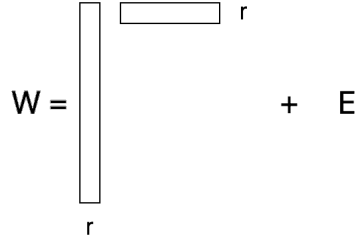
\begin{figure}[ht] 
\centering
\begin{tikzpicture}[scale=1]

\def\gap{0.3}

\node at (-1.5, 1.5) {$M$};

\node at (-0.5, 1.5) {$=$};

\draw (0, 0) rectangle (0.3, 3);
\node at (0.15, -.4) {$\rho$};

\draw (0.3+1*\gap, 2.7) rectangle (2.2+1*\gap, 3);
\node at (2.3+2*\gap, 2.8) {$\rho$};

\node at (3.2+2*\gap,1.5) {$+$};

\node at (4.2+2*\gap,1.5) {$E$};

\end{tikzpicture}

\caption{Rank-$\rho$ approximation of a matrix $M$}\label{fig1}
\end{figure}  

  A 2-factor LRA $AB$ of $M$ of (\ref{eqlra})   can be generalized to a 3-factor LRA: 
\begin{equation}\label{eqrnkrho}
 M=M'+E,~|E|\le \epsilon|M|,~M'=ATB,~A\in \mathbb R^{m\times k},~T\in \mathbb R^{k\times l},~B\in \mathbb R^{l\times n},
\end{equation}  
\begin{equation}\label{eqklmn}
\rho\le k\le m,~\rho\le l\le n
\end{equation}
 for $\rho=\rank(M')$, and typically $k\ll m$ and/or $l\ll n$.
Each of the two pairs of maps
 $$\{AT\rightarrow A;~B\rightarrow B\}~{\rm and}~\{A\rightarrow A;~TB\rightarrow B\}$$ turns a 3-factor LRA $ATB$ of (\ref{eqrnkrho})
into a 2-factor LRA $AB$ of                                                                                                                            (\ref{eqlra}).
  
An important 3-factor LRA of $M$ is 
its $\rho$-{\em top
SVD} (see
Fig. \ref{fig5}),
$M_{\rho}=S_{\rho}\Sigma_{\rho} T_{\rho}^*$
 for a diagonal matrix  $\Sigma_{\rho}=
 \diag(\sigma_j)_{j=1}^{\rho}$ of the $\rho$ largest singular values of $M$ and
  two  matrices $S_{\rho}$ and $T_{\rho}$ of the $\rho$ associated left and right singular vectors, 
  respectively.
  $M_{\rho}$ is said to be
  the $\rho$-{\em truncation} of $M$. 
  

\begin{figure}
[ht]
\centering 
\begin{tikzpicture}[scale=1]

\def\gap{0.3}

\draw (-3.5,0) rectangle (-1, 3);
\node at (-2.2,1.5) {$M$};

\node at (-0.5,1.5) {$=$};

\draw (0,0) rectangle (0.5,3);

\draw (0.5+\gap,2.5) rectangle (1+\gap,3);
\draw (0.5+\gap,3) -- (1+\gap,2.5);
\node at (0.65+\gap,2.66) {\tiny 0};
\node at (0.87+\gap,2.86) {\tiny 0};

\draw (1+2*\gap,2.5) rectangle (3.5+2*\gap,3);

\node at (4+2*\gap,1.5) {$+$};

\draw (4.5+2*\gap,0) rectangle (7+2*\gap, 3);
\node at (5.75+2*\gap,1.5) {$E$};

\end{tikzpicture}
\caption{The figure 
represents   
top SVD of a matrix as well as its  CUR LRA.}  
\label{fig5}
\end{figure}  
     
\begin{theorem} \label{thtrnc} (Eckart-Young-Mirsky Theorem, see {\rm \cite[Thm. 2.4.8]{GL13}.)} 
It holds that $$\tau_{\rho+1}(M):=
\min_{N:~\rank(N)=\rho} |M-N|=|M-M_{\rho}|$$ 
under both spectral and Frobenius norms:
$\tau_{\rho+1}(M)=\sigma_{\rho+1}(M)$ 
under the spectral norm, and 
$\tau_{\rho+1}(M)=\sigma_{F,\rho+1}(M):=\sqrt{\sum_{j> \rho}\sigma_j^2(M)}$ 
under the Frobenius norm.
\end{theorem}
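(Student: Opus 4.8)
The plan is to establish Theorem~\ref{thtrnc} as the classical Eckart--Young--Mirsky theorem, in two parts: first that the truncation $M_\rho$ attains the claimed error, and then that no matrix of rank at most $\rho$ can do better under either norm. For the upper bound I would start from a full SVD $M=\sum_{j=1}^r \sigma_j u_j v_j^*$ with $r=\rank(M)$ and $\sigma_1\ge\sigma_2\ge\cdots\ge\sigma_r>0$, so that $M_\rho=\sum_{j=1}^\rho \sigma_j u_j v_j^*$ and the residual $M-M_\rho=\sum_{j=\rho+1}^r \sigma_j u_j v_j^*$ is itself displayed in SVD form. Reading off its singular values immediately gives $\|M-M_\rho\|=\sigma_{\rho+1}(M)$ and $\|M-M_\rho\|_F=(\sum_{j>\rho}\sigma_j^2(M))^{1/2}$, which settles the ``$\le$'' direction for both norms at once and exhibits $M_\rho$ as a feasible minimizer.

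For the spectral-norm lower bound I would use a dimension count. Fix any $N$ with $\rank(N)\le\rho$; then $\dim\ker(N)\ge n-\rho$, while the span $\mathcal V$ of the top right singular vectors $v_1,\dots,v_{\rho+1}$ has dimension $\rho+1$. Since $(n-\rho)+(\rho+1)>n$, these two subspaces meet in a nonzero vector, so I can pick a unit vector $w\in\ker(N)\cap\mathcal V$, say $w=\sum_{j=1}^{\rho+1}c_j v_j$ with $\sum_j|c_j|^2=1$. Because $Nw=0$ and by orthonormality of the $u_j$,
\[
\|M-N\|\ge \|(M-N)w\|_2=\|Mw\|_2=\Big(\sum_{j=1}^{\rho+1}|c_j|^2\sigma_j^2\Big)^{1/2}\ge \sigma_{\rho+1}(M),
\]
which equals $\|M-M_\rho\|$ and finishes the spectral case.

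For the Frobenius-norm lower bound I would invoke Weyl's subadditivity inequality for singular values, namely $\sigma_{i+j-1}(A+B)\le\sigma_i(A)+\sigma_j(B)$. Writing $M=(M-N)+N$ and taking $j=\rho+1$, the hypothesis $\rank(N)\le\rho$ forces $\sigma_{\rho+1}(N)=0$, so $\sigma_{i+\rho}(M)\le\sigma_i(M-N)$ for every $i\ge1$. Summing the squares gives
\[
\|M-N\|_F^2=\sum_{i\ge1}\sigma_i^2(M-N)\ge\sum_{i\ge1}\sigma_{i+\rho}^2(M)=\sum_{j>\rho}\sigma_j^2(M)=\|M-M_\rho\|_F^2,
\]
completing the Frobenius case. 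Throughout I rely on the unitary invariance of both norms, which lets me pass freely between a matrix and its list of singular values. The main obstacle is precisely the Frobenius lower bound: unlike the spectral case, the simple subspace-intersection argument does not suffice, and one genuinely needs a perturbation inequality for the entire singular spectrum (Weyl's inequality, equivalently the Ky~Fan majorization). Proving that inequality cleanly---or citing it---is the crux; once it is available the summation above is routine.
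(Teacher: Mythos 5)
Your argument is correct, but note that the paper does not actually prove this statement: it is quoted verbatim as Theorem 2.4.8 of Golub--Van Loan, so there is nothing in the source to compare against step by step. What you have written is the standard Eckart--Young--Mirsky proof, and all three pieces are sound: the SVD truncation computation for the upper bound, the kernel-versus-top-singular-subspace dimension count for the spectral lower bound, and Weyl's interlacing inequality $\sigma_{i+j-1}(A+B)\le\sigma_i(A)+\sigma_j(B)$ for the Frobenius lower bound. You correctly identify the Frobenius case as the only nontrivial ingredient; as long as Weyl's inequality is either cited or proved (e.g.\ via the Courant--Fischer min-max characterization of singular values), the proof is complete. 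One small point worth flagging: your statement of the Frobenius error, $\|M-M_\rho\|_F=\bigl(\sum_{j>\rho}\sigma_j^2(M)\bigr)^{1/2}$, is the correct one; the paper's displayed formula $\sigma_{F,\rho+1}(M):=\sum_{j\ge\rho}\sigma_j^2(M)$ omits the square root and starts the sum at the wrong index, so your version silently corrects two typos in the statement as printed.
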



 \begin{theorem}\label{thsngr} {\rm \cite[Cor. 8.6.2]{GL13}.}
 For a pair of ${m\times n}$ matrices $M$ and $M+E$ it holds that
 $$|\sigma_j(M+E)-\sigma_j(M)|\le||E||~{\rm for}~j=1,\dots,\min\{m,n\}. $$
  \end{theorem}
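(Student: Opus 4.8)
The plan is to derive the bound from the Courant--Fischer max-min characterization of singular values together with the triangle inequality. First I would recall that for an $m\times n$ matrix $M$ with $m\ge n$ the $j$-th singular value admits the variational description
$$\sigma_j(M)=\max_{\dim \mathcal V=j}~\min_{x\in\mathcal V,~||x||=1}||Mx||,$$
where $\mathcal V$ ranges over all $j$-dimensional subspaces of $\mathbb R^n$ and $||\cdot||$ on vectors denotes the Euclidean norm. This is the singular-value analogue of the Courant--Fischer theorem applied to the symmetric positive semidefinite matrix $M^*M$, whose eigenvalues are the squares $\sigma_j(M)^2$, so I would either cite that classical fact or sketch its one-line reduction.

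Next, for any unit vector $x$ I would apply the triangle inequality in the form $||Mx||-||E||\le||(M+E)x||\le||Mx||+||E||$, which uses $||Ex||\le||E||\,||x||=||E||$. Fixing the particular $j$-dimensional subspace $\mathcal V$ that attains the maximum for $M$ and taking the minimum over unit vectors $x\in\mathcal V$ yields
$$\sigma_j(M+E)\ge\min_{x\in\mathcal V,~||x||=1}||(M+E)x||\ge\min_{x\in\mathcal V,~||x||=1}||Mx||-||E||=\sigma_j(M)-||E||,$$
so that $\sigma_j(M)-\sigma_j(M+E)\le||E||$. Swapping the roles of $M$ and $M+E$ and replacing $E$ by $-E$ (which leaves $||E||$ unchanged) gives the reverse inequality $\sigma_j(M+E)-\sigma_j(M)\le||E||$, and combining the two bounds proves the claim for every $j=1,\dots,n$.

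I do not anticipate a genuine obstacle, since this is the classical Weyl-type perturbation bound; the only point demanding care is the dimension bookkeeping in the max-min formula, namely that the distinguished subspace $\mathcal V$ exists and that both $M$ and $M+E$ act on the common domain $\mathbb R^n$, so that the same $\mathcal V$ can be used for both matrices. As an alternative route I would mention the Hermitian dilation: the $(m+n)\times(m+n)$ symmetric matrix $\left(\begin{smallmatrix}0 & M\\ M^* & 0\end{smallmatrix}\right)$ has eigenvalues $\pm\sigma_j(M)$ together with extra zeros, and the additive perturbation $\left(\begin{smallmatrix}0 & E\\ E^* & 0\end{smallmatrix}\right)$ has spectral norm $||E||$, so that Weyl's monotonicity theorem for eigenvalues of symmetric matrices yields the stated inequality directly.
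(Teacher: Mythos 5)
Your proof is correct. The paper itself offers no argument for this statement: it is quoted verbatim as Corollary 8.6.2 of Golub and Van Loan, so there is nothing in the source to compare against step by step. Your main route via the Courant--Fischer max-min characterization $\sigma_j(M)=\max_{\dim \mathcal V=j}\min_{x\in\mathcal V,\,\|x\|=1}\|Mx\|$ is the standard textbook derivation: fixing the optimal subspace for $M$, applying the triangle inequality $\|(M+E)x\|\ge\|Mx\|-\|E\|$ on unit vectors, and then symmetrizing in $M$ and $M+E$ is exactly how this Weyl-type bound is usually established, and your dimension bookkeeping (both matrices act on the common domain $\mathbb R^n$, so the same $\mathcal V$ serves for both) is sound. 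The alternative you sketch via the Hermitian dilation $\bigl(\begin{smallmatrix}0 & M\\ M^* & 0\end{smallmatrix}\bigr)$ and Weyl's eigenvalue monotonicity is equally valid and arguably cleaner, since it reduces the singular-value statement to the eigenvalue statement in one stroke; either argument would serve as a self-contained replacement for the citation.
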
 
  
Hereafter $M^+$ denotes the Moore--Penrose pseudo inverse of $M$.

  
 \begin{lemma}\label{lehg} {\rm (The norm of the pseudo inverse of a matrix product, see, e.g., \cite{GTZ97}.)}
Suppose that $A\in\mathbb R^{k\times r}$, 
$B\in\mathbb R^{r\times l}$
and the matrices $A$ and 
$B$ have full rank 
$r\le \min\{k,l\}$.
Then
$$|(AB)^+|
\le |A^+|~|B^+|.$$ 
\end{lemma}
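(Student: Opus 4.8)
The plan is to reduce the inequality to submultiplicativity of the norm by first establishing the reverse-order identity $(AB)^+ = B^+A^+$ under the stated full-rank hypotheses. The crucial point is that this identity fails for general matrices, so the entire content of the lemma lies in exploiting that $A$ has full column rank and $B$ has full row rank.

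First I would record the two consequences of the rank hypotheses. Since $A\in\mathbb R^{k\times r}$ has full rank $r\le k$, its columns are linearly independent, so $A^T A$ is an invertible $r\times r$ matrix, $A^+ = (A^T A)^{-1} A^T$, and hence $A^+ A = I_r$. Dually, since $B\in\mathbb R^{r\times l}$ has full rank $r\le l$, its rows are independent, $B B^T$ is invertible, $B^+ = B^T (B B^T)^{-1}$, and hence $B B^+ = I_r$. I would also note that $AB$ then has rank exactly $r$, so its pseudo inverse is the genuine Moore--Penrose inverse of a rank-$r$ matrix.

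Next, set $X := B^+ A^+$ and verify the four Moore--Penrose conditions certifying that $X=(AB)^+$. The key computation is that $X$ collapses the inner factors: using the two identities above, $(AB)X = A (B B^+) A^+ = A A^+$ and $X(AB) = B^+ (A^+ A) B = B^+ B$. From here the axioms are immediate: $(AB)X(AB) = A A^+ A B = AB$ and $X(AB)X = B^+ B B^+ A^+ = B^+ A^+ = X$, while the required symmetry of $(AB)X = A A^+$ and of $X(AB) = B^+ B$ is exactly the Moore--Penrose symmetry of $A A^+$ and $B^+ B$, which holds for every matrix. This proves $(AB)^+ = B^+ A^+$.

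Finally the lemma follows by submultiplicativity, which holds for both norms unified by $|\cdot|$ (for the spectral norm and, via $\|XY\|_F\le\|X\|\,\|Y\|_F$, for the Frobenius norm as well): $|(AB)^+| = |B^+ A^+| \le |B^+|\,|A^+| = |A^+|\,|B^+|$. I expect the only delicate step to be the justification of the reverse-order law; once the two collapsing identities $A^+A = I_r$ and $BB^+ = I_r$ are in hand, the verification of the Moore--Penrose axioms and the concluding norm bound are routine.
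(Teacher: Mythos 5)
Your proof is correct. The paper itself gives no proof of this lemma, stating it with a citation to \cite{GTZ97}; your argument --- establishing the reverse-order law $(AB)^+=B^+A^+$ for a full-rank factorization by checking the four Moore--Penrose axioms via the collapsing identities $A^+A=I_r$ and $BB^+=I_r$, then invoking submultiplicativity (including the mixed bound $\|XY\|_F\le\|X\|\,\|Y\|_F$ for the Frobenius case) --- is precisely the standard proof of the cited result, with all steps justified.
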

  
\section{CUR decomposition
and CUR LRA}\label{scurlra}

 
    
 For two sets $\mathcal I\subseteq\{1,\dots,m\}$  
and $\mathcal J\subseteq\{1,\dots,n\}$  -- define
the submatrices
$$M_{\mathcal I,:}:=(m_{i,j})_{i\in \mathcal I; j=1,\dots, n},  
M_{:,\mathcal J}:=(m_{i,j})_{i=1,\dots, m;j\in \mathcal J},~{\rm and}~ 
M_{\mathcal I,\mathcal J}:=(m_{i,j})_{i\in \mathcal I;j\in \mathcal J}.$$
   
Given an $m\times n$ matrix $M$ of rank 
$\rho$
and its  nonsingular 
$\rho\times \rho$ submatrix
$G=M_{\mathcal I,\mathcal J}$ one can readily verify that 
$M=M'$ for
\begin{equation}\label{eqcurd} 
M'=CUR,~C=M_{:,\mathcal J},~
  U=G^{-1},~G=M_{\mathcal I,\mathcal J},~{\rm and}~R=M_{\mathcal I,:}.  
\end{equation}   
We call $G$ the {\em generator} and   
 $U$ the {\em nucleus} of {\em CUR decomposition} of $M$ (see Fig. \ref{fig2}).

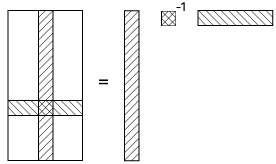
\begin{figure}
[ht]
\centering
\begin{tikzpicture}

\draw (0,0) rectangle (2,4); 

\draw (0.8,0) rectangle (1.2,4);
\fill[pattern=north east lines] (0.8,0) rectangle (1.2,4);

\draw (0,1.1) rectangle (2,1.5);
\fill[pattern=north west  lines] (0,1.1) rectangle (2,1.5);

\node at (2.7,2) {$=$};

\draw (3.3,0) rectangle (3.7,4);
\fill[pattern=north east lines] (3.3,0) rectangle (3.7,4);

\draw (4.1,3.6) rectangle (4.5,4);
\fill[pattern=crosshatch] (4.1,3.6) rectangle (4.5,4);
\node at (4.6,4.15) {\scriptsize$-1$};

\draw (4.9,3.6) rectangle (6.8,4);
\fill[pattern=north west lines] (4.9,3.6) rectangle (6.8,4);

\end{tikzpicture}
\caption{CUR decomposition  with a nonsingular CUR generator}
\label{fig2}
\end{figure}

CUR decomposition is extended to {\em CUR approximation}  of
a matrix $M$ close to a rank-$\rho$ 
matrix (see Fig. \ref{fig5}),  although the approximation
$M'\approx M$
 for $M'$ of (\ref{eqcurd}) tends to  be poor where the generator $G$ is 
 ill-conditioned.
 \begin{remark}\label{recgr} 
The pioneering  
 papers  \cite{GZT95,GTZ97,GZT97},  as well as \cite{GT01,GT11,GOSTZ10,OZ18}, define CGR  approximations having 
 nuclei  $G$; ``G" can
 stand, say, for
``germ". We use the acronym CUR, which is more customary in the West. 
 ``U" can stand, say, for ``unification factor", but notice the alternatives of CNR, CCR, or CSR with    
$N$, $C$, and $S$ standing for {\em ``nucleus",
``core", and ``seed"}.
\end{remark}

 
 

By generalizing  (\ref{eqcurd}) we allow to use
$k\times l$ CUR generators
for $k$ and $l$ satisfying (\ref{eqklmn})
and to choose any $l\times k$ nucleus
$U$ for which the error matrix
$E=CUR-M$ has smaller norm.


Given two matrices $C$ and $R$,   
 the  minimal Frobenius 
 error norm  of CUR LRA 
$$||E||_F=||M-CUR||_F\le ||M-CC^+M||_F+||M-MR^+R||_F$$ is reached   for the nucleus
$U=C^+MR^+$
(see \cite[Eqn. (6)]{MD09}).  
We, however, cannot compute such a nucleus  at sublinear cost and instead seek   
{\em canonical CUR LRA} (cf. \cite{DMM08,CLO16,OZ18}) whose
 nucleus is the Moore-Penrose pseudo inverse of the $\rho$-truncation of a given CUR generator:
\begin{equation}\label{eqcnncur}
U:=G_{\rho}^+. 
\end{equation}
Given a generator $G$ we can compute $G_{\rho}^+$  by using about $kl$
scalars and $O(kl\min\{k,l\})$ flops.

  
  \begin{theorem}\label{thncl} {\rm [A necessary and sufficient criterion for CUR decomposition.]}
   Let $M'=CUR$ be a  canonical CUR of $M$ for
 $U=G_{\rho}^+$, $G=M_{\mathcal I,\mathcal J}$. Then $M'=M$ if and only if $\rank(G) = \rank(M)$. 
\end{theorem}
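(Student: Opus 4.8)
The plan is to reduce everything to a full rank factorization of $M$ and then read off $C$, $R$, and $G$ as products of its restrictions. Write $r:=\rank(M)$; in the canonical setting the truncation parameter equals the rank, so I take $\rho=r$. Fix any rank factorization $M=AB$ with $A\in\mathbb R^{m\times r}$ of full column rank and $B\in\mathbb R^{r\times n}$ of full row rank (for instance the one supplied by the $\rho$-top SVD of Theorem~\ref{thtrnc}). Setting $A_{\mathcal I}:=A_{\mathcal I,:}$ and $B_{\mathcal J}:=B_{:,\mathcal J}$, the restriction identities give $C=M_{:,\mathcal J}=AB_{\mathcal J}$, $R=M_{\mathcal I,:}=A_{\mathcal I}B$, and $G=M_{\mathcal I,\mathcal J}=A_{\mathcal I}B_{\mathcal J}$. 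Since passing to a submatrix cannot raise rank, one has $\rank(G)\le r$ throughout.

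I would dispatch the ``only if'' direction first, as it is immediate from rank inequalities. If $M=CUR$ with $U=G_\rho^+$, then $r=\rank(M)=\rank(CUR)\le\rank(U)=\rank(G_\rho)=\min(\rho,\rank(G))=\min(r,\rank(G))$, using that the $\rho$-truncation retains exactly $\min(\rho,\rank(G))$ nonzero singular values and that rank is invariant under pseudoinversion. Together with $\rank(G)\le r$ this forces $\rank(G)=r=\rank(M)$.

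For the ``if'' direction I would first observe that $\rank(G)=r$ is equivalent to $A_{\mathcal I}$ having full column rank $r$ and $B_{\mathcal J}$ having full row rank $r$: since $G=A_{\mathcal I}B_{\mathcal J}$ factors through $\mathbb R^r$, its rank equals $r$ exactly when neither factor drops rank, and conversely full rank of both factors yields $\range(G)=\range(A_{\mathcal I})$ of dimension $r$. Under these two full rank conditions the reverse order law for the Moore--Penrose inverse applies, giving $G^+=(A_{\mathcal I}B_{\mathcal J})^+=B_{\mathcal J}^+A_{\mathcal I}^+$, together with the cancellations $A_{\mathcal I}^+A_{\mathcal I}=I_r$ and $B_{\mathcal J}B_{\mathcal J}^+=I_r$. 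Because $\rank(G)=r=\rho$ the truncation is vacuous, so $G_\rho=G$ and $U=G^+$; substituting and regrouping then gives $CUR=(AB_{\mathcal J})(B_{\mathcal J}^+A_{\mathcal I}^+)(A_{\mathcal I}B)=A(B_{\mathcal J}B_{\mathcal J}^+)(A_{\mathcal I}^+A_{\mathcal I})B=AB=M$.

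I expect the main obstacle to be this last direction, specifically the justification that the single hypothesis $\rank(G)=r$ simultaneously licenses the reverse order law and both cancellation identities; the whole argument hinges on cleanly translating $\rank(G)=\rank(M)$ into the full column/row rank of the two factors $A_{\mathcal I}$ and $B_{\mathcal J}$. A secondary point worth recording is the role of the canonical choice $\rho=\rank(M)$ (equivalently $\rho\ge\rank(G)$), which is exactly what makes $G_\rho=G$ so that the canonical nucleus reduces to $G^+$; for $\rho<\rank(M)$ the truncation would cap $\rank(CUR)$ strictly below $\rank(M)$ and the stated equivalence would break.
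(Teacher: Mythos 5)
Your proof is correct, and its ``if'' direction takes a genuinely different route from the paper's. The ``only if'' direction is essentially the same in both: the chain $\rank(M)=\rank(CUR)\le\rank(U)=\rank(G_\rho)\le\rank(G)\le\rank(M)$ (the paper phrases the last inequality via $\sigma_j(G)\le\sigma_j(M)$ for submatrices, you via $\rank(G)\le r$ from the factored form; both are fine). For the ``if'' direction the paper argues in the classical skeleton-decomposition style: once $\rank(G)=\rank(M)=\rho$ it notes $G_\rho=G$, that $CG^+R$ has rank $\rho$, and that the two rank-$\rho$ matrices $M$ and $CG^+R$ ``share'' the columns indexed by $\mathcal J$ and the rows indexed by $\mathcal I$, hence coincide --- a terse appeal to the fact that a rank-$\rho$ matrix is determined by $\rho$ independent columns and $\rho$ independent rows, stated there with $C\in\mathbb R^{m\times\rho}$, $R\in\mathbb R^{\rho\times n}$, i.e.\ implicitly for a $\rho\times\rho$ generator. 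You instead push a full rank factorization $M=AB$ through the index restrictions, translate $\rank(G)=r$ into full column rank of $A_{\mathcal I}$ and full row rank of $B_{\mathcal J}$, and finish by the reverse-order law $G^+=B_{\mathcal J}^+A_{\mathcal I}^+$ together with the one-sided-inverse cancellations. What your version buys is a fully algebraic, self-contained verification that works verbatim for rectangular $k\times l$ generators with $k,l\ge\rho$ (the generality the surrounding section actually needs) and makes explicit why the canonical nucleus $G_\rho^+$ collapses to $G^+$; what the paper's version buys is brevity, at the cost of leaving the final ``two rank-$\rho$ matrices sharing rows and columns coincide'' step to the reader. Your closing remark that the equivalence fails for $\rho<\rank(M)$ is also a worthwhile observation that the paper does not make.
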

\begin{proof}  $\sigma_j(G)\le \sigma_j(M)$ for all $j$
 because $G$ is a submatrix of $M$. Hence 
  $\epsilon$-rank$(G)\le \epsilon$-rank$(M)$ for all nonnegative $\epsilon$,
   and  in particular $\rank(G) \le \rank(M)$.                                                                  
 
 Now let $M=M'=CUR$. Then clearly $$\rank(M)\le \rank(U)=\rank(G_{\rho}^+)=\rank(G_{\rho})\le \rank(G),$$
 Hence  $$\rank(G) \ge \rank(M),~{\rm and~so}~ 
 \rank (G)=\rank (M)~{\rm if}~M'=M.$$
 
It remains to deduce that $$M=CG_{\rho}^+R~{\rm if}~\rank(G) =\rank(M):=\rho,$$
but in this case  $G_{\rho}=G$, and so 
$$\rank(CG_{\rho}^+R)=\rank(C)=\rank(R)=\rho.$$
Hence the rank-$\rho$ matrices $M$ and $CG_{\rho}^+R$ share  their rank-$\rho$ submatrices 
$C$
and $R$.
\end{proof}

 \begin{remark}\label{rencl}
 Can we extend the theorem by proving that  $M'\approx M$
 if and only if   $\epsilon$-rank$(G) = \epsilon$-rank$(M)$ for a small positive $\epsilon$? The ``only if'' claim  cannot be extended, e.g., for
 $$
 M = 
 \begin{pmatrix}
 1 & 0 & 0\\
 0 & \epsilon & 0\\
 0 & 1 & 0
 \end{pmatrix},$$
$\epsilon\approx 0$, and the $2\times 2$ leading submatrix  $G$ of $M$.
Indeed, $\rank(M)=
 \rank(G)=2$, and so Thm. \ref{thncl} implies
 that $M'=M$, while
 $\epsilon$-$\rank(M)=2>\epsilon$-$\rank(G)=1.$
 \end{remark}

  
\section{CUR LRA algorithms running at sublinear cost}\label{sprimc}
  

\subsection{Primitive and Cynical algorithms}\label{scyn}
  
        
 Given an $m\times n$
matrix $M$ admitting close rank-$\rho$ approximation
and a pair of $k$ and $l$ satisfying (\ref{eqklmn}), 
 define a canonical CUR LRA of $M$
  for a 
fixed or chosen at random pair  
 of  sets
 $\mathcal I$ and $\mathcal J$ of $k$ row and $l$ column indexes, respectively, and  
 call the resulting algorithm 
 {\bf Primitive}. Apart from the selection 
 of the sets 
 $\mathcal I$ and $\mathcal J$ the algorithm  computes the $\rho$-truncation 
 $(M_{\mathcal I,\mathcal J})_{\rho}$
 of the matrix 
 $M_{\mathcal I,\mathcal J}$ and its pseudo inverse 
$((M_{\mathcal I,\mathcal J})_{\rho})^+$
by using about $kl$  scalars and  $O(kl\min\{k,l\})$ flops. 

The following CUR LRA algorithm
  (we call it   {\bf Cynical})\footnote{Here we allude to  the benefits of the austerity and simplicity of primitive life,   
advocated by Diogenes the Cynic, and not to shamelessness and  distrust associated with modern  cynicism.}  first fixes a $p\times q$
submatrix of $M$ and then  computes its $k\times l$ submatrix  by applying a compression  algorithm of \cite{GE96,
P00,DMM08}, using about $pq$ scalars and  
$O(pq\min\{p,q\})$ 
 flops.

  \begin{figure}[ht] 
\centering
\begin{tikzpicture}

\draw (0,0) rectangle (4,3);

\fill[pattern=north east lines] 
    (1.2,1.25) rectangle (2,2.05);
\draw (1.2,1.25) rectangle (2,2.05);

\def\bsize{0.3}
\fill[black] 
    (1.4,1.45) rectangle  (1.65,1.7);

\end{tikzpicture}
\caption{A cynical CUR algorithm (the stripes mark a $p\times q$ submatrix; a $k\times l$ CUR generator is shown in black).
}\label{fig3} 
\end{figure}
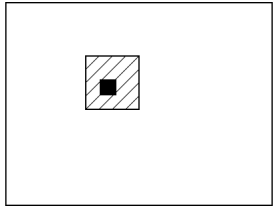

\begin{algorithm}\label{algcnc} 
For  an $m\times n$  matrix $M$,  a target rank $\rho$, and four  integers $k$, $l$, $p$, $q$ such that   
\begin{equation}\label{eqklmnpqr}  
0<\rho\le k\le p\le m,~\rho\le l\le q\le n,~{\rm and}~ kl<pq,
\end{equation}
fix or  randomly sample  
a pair  
 of  sets
 $\mathcal I$ and $\mathcal J$ of $p$ row and $q$ column indexes, respectively,  compute a $k\times l$ CUR generator
 $G_{k,l}$
for the  $p\times q$ submatrix
$M_{\mathcal I,\mathcal J}$ of $M$
by applying to $M_{\mathcal I,\mathcal J}$ one of the algorithms of 
\cite{GE96,
P00,DMM08}, compute the $\rho$-truncation $G_{k,l,\rho}$, and 
  build on it   CUR LRA of $M$.
\end{algorithm}
  

The two-stage choice of the CUR generator
in a cynical algorithm  decreases the error norm;
application of the algorithms of \cite{GE96,
P00} can increase  the output error bound of Alg. \ref{algcnc} by a factor of
$\sqrt{(p-k)(q-l)kl}$, expected to disappear
where compression uses randomization of \cite{DMM08}.



\subsection{Cross-Approximation (C--A) iterations}\label{scait}

  {\bf C--A iterations} (see Fig. \ref{fig4}):

 \begin{itemize} 
\item
For  an $m\times n$ matrix $M$ and 
target rank $r$, fix four integers $k$, $l$, $p$ and $q$ satisfying   
(\ref{eqklmnpqr}). [C-A iterations are simplified in a special case where 
 $p:=k$ and $q:=l$.]  
\item
Fix an $m\times q$ ``vertical" submatrix of the matrix $M$, made up of its $q$ columns.\footnote{One can alternatively begin C--A iterations with a ``horizontal" submatrix.} 
\item
By applying a fixed CUR LRA subalgorithm, e.g., one of the algorithms of \cite{O18,
GE96,
P00,
DMM08},\footnote{Such a subalgorithm runs at sublinear cost on skinny inputs  involved in C--A iterations,
although the  algorithms of 
\cite{O18,GE96,P00,DMM08}
 run at superlinear cost on  an $m\times n$ matrix $M$.}
 compute 
a $k\times l$ CUR generator $G$
of this submatrix\footnote{At this stage one can apply 
progressive
alternating direction pivoting of \cite{LYMHY,X24,
CY25}.} and reuse it for the matrix $M$. 

\item
Output the resulting CUR LRA of $M$ if it is close enough. 
\item
Otherwise swap $p$ and $q$ and reapply
the algorithm to the  matrix $M^T$.

[This is equivalent to computing a $k\times l$ CUR generator of a fixed  
 $p\times n$ ``horizontal" submatrix $M_1$ of $M$ that covers the submatrix $G$.]
 \item
 Recursively alternate such ``vertical" and  
  ``horizontal" steps until the new  CUR generator
  computed at the current iteration  coincides with the original one or until the number of recursive C--A steps exceeds a fixed tolerance bound. 
 \end{itemize}

      

 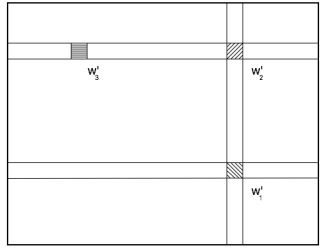
\begin{figure} 
[ht]  
\centering
\begin{tikzpicture}
\def\cols{20}
\def\rows{16}
\def\cellsize{0.25}
\def\width{\rows*\cellsize}

\draw (14*\cellsize, 0) -- (14*\cellsize, \rows*\cellsize);
\draw (15*\cellsize, 0) -- (15*\cellsize, \rows*\cellsize);

\draw (0, \width - 3*\cellsize) -- (\cols*\cellsize, \width - 3*\cellsize);
\draw (0, \width - 4*\cellsize) -- (\cols*\cellsize, \width - 4*\cellsize);
\draw (0, \width - 11*\cellsize) -- (\cols*\cellsize, \width - 11*\cellsize);
\draw (0, \width - 12*\cellsize) -- (\cols*\cellsize, \width - 12*\cellsize);

\draw[thick] (0,0) rectangle (\cols*\cellsize, \rows*\cellsize);
\draw (4*\cellsize, \width - 4*\cellsize) rectangle (5*\cellsize, \width - 3*\cellsize);

\draw[pattern={Lines[angle=-45,distance=1.25pt]}]
    (14*\cellsize, \width - 4*\cellsize) rectangle (15*\cellsize, \width - 3*\cellsize);

\draw[pattern={Lines[angle=45,distance=1.25pt]}]
    (14*\cellsize, \width - 12*\cellsize) rectangle (15*\cellsize, \width - 11*\cellsize);

\draw[pattern={Lines[angle=90,distance=1.25pt]}]
    (4*\cellsize, \width - 4*\cellsize) rectangle (5*\cellsize, \width - 3*\cellsize);
\end{tikzpicture}
\caption{The first three recursive C--A steps output three
striped submatrices.}
\label{fig4}
\end{figure}
 
{\bf Initialization recipes.} For a large class of input matrices
 it is  efficient to apply   
  {\em adaptive C-A iterations} (cf. \cite{B00,B11,BG06,BR03}). For initialization they adapt
 Gaussian elimination with  pivoting combined with dynamic
 search for gaps in the spectrum of the singular values of $M$. The  alternative initialization
in \cite{GOSTZ10} uses  $O(n\rho^2)$ flops to initialize the C--A iterations for an $n\times n$ input and
 $q=s=\rho$, and then  the algorithm uses $O(\rho n)$ flops per C--A step.

\subsection{Volume maximization and {\em CUR LRA}}
\label{svmmxcur}
 
 
 The error norm  of CUR LRA is
 estimated at linear or superlinear cost for worst case inputs (whp if randomization is applied), but empirically  one should stop
 C-A iterations
 as soon as 
 new CUR generator coincides with the previous one.

  \cite{GOSTZ10,OZ18,O18}   achieves this  in finite (and empirically small) number of steps by increasing the 
 volume or projective volume of CUR generator 
at every C-A step. According to 
Thm. \ref{th3} of \cite{OZ18}, recalled below,
 the output CUR  closely approximates $M$ under rather mild
 conditions. We first recall the relevant definitions.
 
 \begin{definition}\label{defmxv}
For   
three integers $k$,  $l$, and $r$ such that
$1\le r\le \min\{k,l\}$,
define the {\em volume} $v_2(M):=\prod_{j=1}^{\min\{k,l\}} \sigma_j(M)$ 
of a  $k\times l$ matrix $M$ and its $r$-{\em projective volume}
$v_{2,r}(M):=\prod_{j=1}^r \sigma_j(M)$
 such that
$v_{2,r}(M)=v_{2}(M)~{\rm if}~r=\min\{k,l\}$,
$v_2^2(M)=\det(MM^*)$ if $k\ge l$;
$v_2^2(M)=\det(M^*M)$ if $k\le l$,
$v_2^2(M)=|\det(M)|^2$ if $k = l$.
\end{definition}

 Given a matrix $W$, five integers $k$,  $l$, $m$, $n$, and $r$ such that $1\le r\le \min\{k,l\}$, and a real $h>1$, and an $m\times n$ 
matrix $W$, 
 its $k\times l$ submatrix $G$ has  {\em locally $h$-maximal} volume (resp. $r$-projective volume)  in $W$ if  $v_2(G)$ (resp. $v_{2,r}(G)$) is maximal up to a factor of $h$ among all $k\times l$                                                                                                                                                                                                                                                                                                                                                   submatrices
 of $W$ that differ from $G$ in a single row and/or a single column.
 We	write {\em maximal} for $1$-maximal
 and drop ``locally" if the volume maximization is over all submatrices of
 a fixed size $k\times l$. 
 
For an $m\times n$ matrix $W=(w_{i,j})_{i,j=1}^{m,n}|$ 
  define its Chebyshev norm $||W||_C:=\max_{i,j=1}^{m,n}|w_{ij}|$  such that
  (cf. \cite{GL13}) 
\begin{equation}\label{eq0}
||W||_C\le ||W||_2 \le ||W||_F\le \sqrt {mn}~||W||_C.
\end{equation}

 Recall the following result of  \cite{OZ18}
 extending \cite{GZT95,
 GTZ97,GZT97,GT01}.

\begin{theorem}\label{th3}  
Suppose that  
 $W_{k,l}=W_{\mathcal I,\mathcal J}$
is a $k\times l$ submatrix of 
 an $m\times n$ matrix $W$,
 $U=W_{k,l,r}^+$ 
 is the canonical nucleus of
a  {\em CUR LRA} of $W$, $E=W-CUR$, $h\ge 1$,
 and the $r$-projective volume of
$W_{\mathcal I,\mathcal J}$ is  locally  $h$-maximal.  
Then 
$$||E||_C\le h~f(k,l,r)~\sigma_{r+1}(W)
~~{\rm for}~~f(k,l,r):=
\sqrt{\frac{(k+1)(l+1)}{(k-r+1)(l-r+1)}}. 
$$
\end{theorem}

  \begin{remark}\label{re3}  
For $r=\min\{k,l\}$, the $r$-projective volume turns into volume;  then $f(k,l,r)$ turns into  $\sqrt{\frac{(k+1)(l+1)}{(|l-k|+1)}}$ and \cite{ALS24}
   strengthens 
the bound on the norm $||E||_C$ a little.
\end{remark}

\section{Background for random matrix computations}\label{ssdef}

 
 
\subsection{Gaussian and factor-Gaussian matrices of low rank}

Hereafter $\mathbb E(w)$ denotes the expected value of random variable $w$,  $\eqid$ 
denotes the equality  in distribution,      
 $\preceq$ and $\succeq$ denote {\it statistically less or equal to} and
{\it statistically greater or equal to}, respectively, and $\mathcal{G}^{m\times n}$ denotes
an $m\times n$ random Gaussian matrix. 



\begin{theorem}\label{thrnd} {\em [Nondegeneration of a Gaussian Matrix.]}
Let $F\eqid\mathcal G^{r\times m}$, 
$H\eqid\mathcal G^{n\times r}$, $M\in  
\mathbb R^{m\times n}$, and $r\le\rank(M)$. 
Then 
the matrices $F$, $H$, $FM$,  and $MH$  
have full rank $r$ 
with probability 1.
\end{theorem}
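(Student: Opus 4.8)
The plan is to reduce all four rank claims to a single principle: if a polynomial $p$ in the entries of a Gaussian matrix is not identically zero, then $p$ vanishes only on a set of Lebesgue measure zero, and hence, since a Gaussian matrix has an absolutely continuous distribution, $p=0$ with probability $0$. First I would record this principle, whose justification is the standard fact that the zero set of a nonzero real polynomial in $N$ variables has measure zero in $\mathbb R^N$.

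Next I would treat $F$ and $H$ directly. An $r\times m$ matrix with $r\le m$ fails to have full rank $r$ exactly when all of its $r\times r$ minors vanish; each such minor is a polynomial in the entries of $F$, and at least one of them, say the minor formed by the first $r$ columns, is not the zero polynomial, since it equals $1$ when those columns are the standard basis vectors. Thus the degenerate set lies in the zero locus of a nonzero polynomial, and by the principle above $F$ has full rank $r$ almost surely; the argument for $H$ is identical after transposing. Note that $r\le\rank(M)\le\min\{m,n\}$ guarantees $r\le m$ and $r\le n$, so ``full rank $r$'' is the maximal possible rank in each of the four cases.

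For the products $FM$ and $MH$ the entries are linear, hence polynomial, functions of the Gaussian entries of $F$ (resp. $H$), with the fixed entries of $M$ as coefficients, so each $r\times r$ minor of $FM$ is again a polynomial in the entries of $F$. The only thing to verify is that some such minor is not identically zero, and this is where the hypothesis $r\le\rank(M)$ enters. Right multiplication by $M$ sends a row vector $x\in\mathbb R^{m}$ to $xM$, a linear map whose image is the row space of $M$, of dimension $\rank(M)\ge r$; I can therefore pick $r$ vectors $f_1,\dots,f_r$ whose images $f_1M,\dots,f_rM$ are linearly independent, and assembling them as the rows of a specific matrix $F_0$ makes $F_0M$ have rank $r$, so at least one $r\times r$ minor of $F_0M$ is nonzero. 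That exhibits a nonzero minor polynomial, and the measure-zero principle finishes the claim for $FM$; the symmetric argument with the column space of $M$ handles $MH$.

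The main obstacle is exactly this witness construction for $FM$ and $MH$: one must use $\rank(M)\ge r$ to guarantee that the relevant minor polynomial is not the identically zero polynomial, since otherwise the measure-zero argument would be vacuous. An alternative to the witness route is a direct inductive argument: assuming $f_1M,\dots,f_iM$ are already independent for some $i<r$, the preimage under $x\mapsto xM$ of their span is a subspace of $\mathbb R^{m}$ of dimension $i+(m-\rank M)\le (r-1)+(m-\rank M)<m$, hence a proper subspace of measure zero, so a fresh Gaussian row $f_{i+1}$ avoids it almost surely and independence propagates to all $r$ rows. I would present the minor/polynomial version as primary, since it covers $F$, $H$, $FM$, and $MH$ uniformly.
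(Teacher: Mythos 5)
Your proposal is correct and follows essentially the same route as the paper: both reduce all four claims to the fact that the zero set of a not-identically-zero polynomial (an $r\times r$ minor) has Lebesgue, hence Gaussian, measure zero. You are in fact slightly more careful than the paper, which asserts that $\det(B)=0$ cuts out a lower-dimensional variety without explicitly exhibiting, as you do via the witness $F_0$ with $\rank(F_0M)=r$, that the relevant minor is a nonzero polynomial --- the one place where the hypothesis $r\le\rank(M)$ is genuinely needed.
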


 
\begin{proof}
Fix any of the matrices 
 $F$, $H$,  $FM$, and $MH$ and its $r\times r$ submatrix $B$. Then 
 the equation $\det(B)=0$ 
  defines an algebraic variety of a lower
dimension in the linear space of the entries of the matrix because in this case $\det(B)$
is a polynomial of degree $r$ in the entries of the matrix $F$ or $H$
(cf. \cite[Proposition 1]{BV88}). 
Clearly, such a variety has Lebesgue  and
Gaussian measures 0, both being absolutely continuous
with respect to one another. This implies  the theorem.
\end{proof}


\begin{assumption}\label{assmp1}  {\em [Nondegeneration of a Gaussian matrix.]}
Throughout this paper we simplify the statements of our results by assuming that a Gaussian matrix  has full rank and ignoring the chance   for its degeneration, which has probability 0. 
\end{assumption}

In this section and  Sec. \ref{spreprmlt}
we call an $m\times n$ (possibly rectangular) matrix $M$ {\em orthogonal}
 if it has orthonormal rows and/or columns  or equivalently 
if $M^*M$ and/or  $MM^*$ is an identity matrix.


\begin{lemma}\label{lepr3} {\rm [Orthogonal invariance of a Gaussian matrix.]}
Suppose that $k$, $m$, and $n$  are three  positive integers, $k\le \min\{m,n\},$
$G\eqid \mathcal G^{m\times n}$, $S\in\mathbb R^{k\times m}$, 
 $T\in\mathbb R^{n\times k}$, and
$S$ and $T$ are orthogonal matrices.
Then $SG$ and $GT$ are Gaussian matrices.
\end{lemma}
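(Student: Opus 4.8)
The plan is to reduce the claim to the basic orthogonal invariance of the standard Gaussian distribution, applied columnwise for $SG$ and rowwise for $GT$. First I recall that $G=G_{m,n}\in\mathcal G^{m\times n}$ means that the $mn$ entries of $G$ are iid standard normal; equivalently, the columns $g_1,\dots,g_n$ of $G$ are independent random vectors, each distributed as $\mathcal N(0,I_m)$. I also keep in mind the orthogonality convention of the paper: an orthogonal $S\in\mathbb R^{k\times m}$ with $k\le m$ satisfies $SS^*=I_k$, so that $S$ has orthonormal rows, whereas an orthogonal $T\in\mathbb R^{n\times k}$ with $k\le n$ satisfies $T^*T=I_k$, so that $T$ has orthonormal columns.

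For $SG$ I would argue as follows. Since $S$ is deterministic, the $j$th column of $SG$ is $Sg_j$, and $Sg_j$, being a fixed linear image of the Gaussian vector $g_j\sim\mathcal N(0,I_m)$, is again a centered Gaussian vector, now with covariance $S\,I_m\,S^*=SS^*=I_k$; hence $Sg_j\sim\mathcal N(0,I_k)$. A centered Gaussian vector with covariance $I_k$ has density proportional to $\exp(-x^*x/2)$, which factors as a product of $k$ standard normal densities, so its coordinates are iid $\mathcal N(0,1)$. Moreover the vectors $Sg_1,\dots,Sg_n$ are mutually independent, because the $g_j$ are independent and $S$ is a fixed matrix. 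Therefore all $kn$ entries of $SG$ are iid standard normal, i.e. $SG\in\mathcal G^{k\times n}$.

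For $GT$ I would simply transpose and invoke the case just treated. Indeed $GT=(T^*G^*)^*$, where $G^*\in\mathcal G^{n\times m}$ is Gaussian (transposition merely relabels the iid entries of $G$) and $T^*\in\mathbb R^{k\times n}$ satisfies $T^*(T^*)^*=T^*T=I_k$, so $T^*$ is an orthogonal multiplier in the sense above. By the part already established, $T^*G^*\in\mathcal G^{k\times m}$, and transposing once more yields $GT=(T^*G^*)^*\in\mathcal G^{m\times k}$.

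I do not expect any genuine obstacle here: the only points requiring care are bookkeeping ones, namely keeping the two orthogonality conventions straight ($SS^*=I_k$ for the row multiplier versus $T^*T=I_k$ for the column multiplier) and passing correctly from a diagonal covariance to independence via factorization of the joint Gaussian density rather than merely asserting it. Both steps are standard, so the proof is short and the covariance computation $SS^*=I_k$ is its entire content.
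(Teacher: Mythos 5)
Your proof is correct, and it is the standard argument: the covariance computation $SS^*=I_k$ for the columns $Sg_j$, independence across columns, and transposition to handle $GT$. The paper states Lemma \ref{lepr3} without proof (treating it as a known fact), so there is no alternative route to compare against; your write-up supplies exactly the justification the paper leaves implicit, with the orthogonality conventions handled correctly.
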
 


\begin{definition}\label{deffctrg1}  {\em [Factor-Gaussian matrices.]} 
Suppose that $\rho \le \min\{m, n\}$, 
$$H_1 \eqid \mathcal{G}^{m\times \rho}~{\rm and}~H_2 \eqid \mathcal{G}^{\rho \times n}$$ are 
two independent random Gaussian matrices, and  $$A\in \mathbb{R}^{m\times \rho},~B\in\mathbb{R}^{\rho\times n},~{\rm and}~\Sigma\in\mathbb{R}^{\rho\times\rho}$$ are full rank {\it well-conditioned} 
constant 
matrices. 

(i) Then we call $AH_2$, $H_1B$, and $H_1\Sigma H_2$ right, left, and two-sided factor-Gaussian matrices of rank $\rho$, respectively
(cf. Assumption \ref{assmp1}). 

(ii) We refer to small-norm perturbations of 
factor-Gaussian matrices  of rank $\rho$ as to {\em perturbed} 
right, left, and two-sided
factor-Gaussian matrices of rank $\rho$ as well as to
right, left, and two-sided
factor-Gaussian matrices of $\epsilon$-{\em rank} $\rho$ for a fixed small positive $\epsilon$ (see
Fig. \ref{fig5}).
\end{definition} 


\begin{remark}\label{repr3}
Substitute SVD $\Sigma=SDT$
into the product $H_1\Sigma H_2$
and rewrite it as
$H'_1DH'_2$
where $H'_1=H_1S$
and $H'_2=TH_2$
are Gaussian random matrices by   virtue of Lemma \ref{lepr3}.
Hence in our definition of a two-sided Gaussian matrix we can  assume wlog that $\Sigma$
is a diagonal matrix, possibly banded with rows or columns filled with 0s.
\end{remark}

   
  
 
 

\subsection{Norms of a Gaussian  matrix and its pseudo inverse
}\label{snrmg}
Hereafter  we only use the spectral norm $||\cdot||$ of matrices, $\Gamma(x)=
\int_0^{\infty}\exp(-t)t^{x-1}dt$
denotes the Gamma function,   and we write 



\begin{equation}\label{eqexpct}
\mathbb E||M||:=\mathbb E(||M||)~{\rm and}~e:=2.7182818\dots.
\end{equation}

\begin{definition}\label{defnrm}{\rm [Norms of a Gaussian matrix and its pseudo inverse.]} 
Define random variables $\nu_{m, n} \eqid ||\mathcal{G}^{m\times n}||$ and $\nu^+_{m, n} \eqid ||(\mathcal{G}^{m\times n})^+||$.
\end{definition}


\begin{theorem}\label{thsignorm} {\rm [Expected norm of a Gaussian matrix.]}

  
 
 $\mathbb E(\nu_{m,n})\le \sqrt m+\sqrt n$.
\end{theorem}
\begin{proof}
See \cite[Thm. II.7]{DS01}.
\end{proof}


\begin{theorem}\label{thsiguna} 
 {\rm [Expected norm of the inverse of a Gaussian matrix.]} 


  
 $\mathbb E(\nu^+_{m,n})\le \frac{e\sqrt{m}}{m-n}$ 
provided that $m\ge n+2\ge 4$.

\end{theorem}


\begin{proof}
 See 
%
\cite[Eqn. (10.4)]{HMT11}.
\end{proof}
 

\begin{remark}\label{resig}
\cite[Thm. II.7]{DS01}, 
\cite[the proof of Lemma 4.1]{CD05}, \cite[Prop. 10.4 and Eqn. (10.3)]{HMT11}, and \cite[Thm. 3.3]{SST06} estimate 
probability distribution  
of $\nu_{m,n}$ 
and $\nu^+_{m,n}$, but in this paper 
we already succeed by combining Thms.  \ref{thsignorm} and \ref{thsiguna} with  Markov inequality.
 \end{remark}



\section{A posteriori errors of a canonical CUR LRA}\label{scgrbckg} 


\subsection{ Error Estimation: an Outline and the Statement} 
 We estimate the error norm of CUR rank-$\rho$ approximation of $M$ by comparing it with CUR
 decomposition of a nearby rank-$\rho$ matrix $M'$ defined by the same sets of row and column indices.

\begin{outline}\label{pr1} {\rm [Error Estimation for  a Canonical CUR LRA.]} 
   \begin{enumerate}
  \item
  Consider (but do not compute) an auxiliary 
  $m\times n$ matrix $M'$ of rank $\rho$ that approximates the  matrix  $M$ within a fixed norm bound $\epsilon$ such that 
 \begin{equation}\label{eqmm'} 
\sigma_{\rho+1}(M)\le  ||M-M'||\le  \epsilon.
\end{equation} 
  [We can apply our study to any choice of $M'$. E.g.,
  $||M-M'||=\epsilon:=\sigma_{\rho+1}(M)$
  for a natural choice of                                                                                                                                                                                                                                                                                                                                                                                                                                                                                                                                                                                                                                                                                                                                                                                                                                                                                                                                                                                                                                                                                                                                                                                                                                                                                                                                                                                                                                                  $M'=M_{\rho}$, but 
 in Sec. \ref{serrrnd} we apply this study to matrix $M$ being 
  a norm-$\epsilon$ perturbation of a factor-Gaussian matrix $M'$ of  Def. \ref{deffctrg1}.]  
  \item
For the matrices $M$ and  $M'$ fix  two 
 row and column 
index sets
$\mathcal I$ and $\mathcal J$, respectively,
and define  $k\times l$
generators  $G=M_{\mathcal I,\mathcal J}$ and $G'=M_{\mathcal I,\mathcal J}'$,   nuclei $U=G_{\rho}^+$ and $U'=G_{\rho}'^+$,
and canonical  CUR approximation 
 $M\approx CUR$ and  decomposition $M'=C'U'R'$. 
  \item
  Observe that
\begin{equation}\label{eqwc} 
|| M- C U R||\le
|| M-M'||+||M'- C U R||\le \epsilon + ||C'U'R'- C U R||.
\end{equation}
\item%
Bound the norm
$||C'U'R'- C U R||$ in terms of the values
$\epsilon$, $||C||$, $||U||$, and $||R||$.
\end{enumerate}
\end{outline}  

Next we  
elaborate upon step 4 provided 
that we have already performed steps 1 -- 3. 

\begin{theorem}\label{thm:error_estimate}
Given an $m\times n$ matrix $M$, a $k\times l$ matrix $G := M_{\mathcal{I}, \mathcal{J}}$, a positive integer $\rho < \min\{k, l\}$,  and an $m\times n$ rank-$\rho$ matrix $M'$
satisfying (\ref{eqmm'}) for $\epsilon\le \sigma_{\rho}(G)$, 
write 
\begin{equation}\label{eqv}
C:=M_{:,\mathcal{J}},~R:=M_{\mathcal{I}, :},~ 
 U = G_{\rho}^+,
\end{equation}
\begin{equation}\label{eqzt}
\zeta := 
\begin{cases}
\sqrt{2} & {\rm for} ~~\rho = \min\{k, l\} \\
(1+\sqrt{5})/2 &{\rm for} ~~\rho < \min\{k, l\}
\end{cases}
\end{equation}
and assume that $v$ and $\theta$ satisfy 
(\ref{eqtheps1}).
 Then it  holds that
 \begin{equation}\label{eqn:error_estimate}
    || M - CUR || \le (v+1)\Big(\frac{2\zeta}{1-\theta} ~(v+1)+ 2\Big)\epsilon,~{\rm where}~2\zeta\le 
1+\sqrt{5}~{\rm (cf.~ (\ref{eqzt}))}.
 \end{equation}
\end{theorem}

 In view of (\ref{eqwc}) we only need to estimate the norm $||C'U'R' - CUR||$.

\subsection{The first bound on
$||C'U'R' - CUR||$}
\begin{lemma}\label{thwc}  
Fix  five integers  
$k$, $l$, $m$, $n$, and $\rho$ such that $\rho\le k\le m$
and $\rho\le l\le n$,  an  $m\times n$ matrix $M$,  
its rank-$\rho $ approximation $M'$  satisfying
(\ref{eqmm'}), 
and canonical CUR LRAs
$$ M\approx C U R~{\rm and}~M'=C'U'R'$$
defined by the same pair of 
 index sets
$\mathcal I$ and  $\mathcal J$ of cardinality $k$ and $l$, respectively, such that 
$$C:=M_{:,\mathcal J},~
R:=M_{\mathcal I,:},~U=G_{\rho }^+,~
~ C':= M'_{:,\mathcal J},~R':= M'_{\mathcal I,:},~U'=G_{\rho}'^+,$$
$$ G=M_{\mathcal I,\mathcal J},~{\rm and}~G'=M'_{\mathcal I,\mathcal J}.$$  
 Then   
$$||C' U' R'- C U R||\le 
(||R||+||C'||)~|| U||~\epsilon+  
||C'|| ~||R'||~||U'- U||.$$
\end{lemma}
\begin{proof}
Notice that   
$$ C U R-C'U'R'=( C-C')  U R+
C'  U( R-R')+ 
C'( U-U')R'.$$
Therefore
\begin{align*}
||  C U R-C'U'R'|| &\le
|| C-C'||~|| U||~|| R||+  
||C'||~|| U||~|| R-R'||+
||C'||~|| U-U'||~||R'||.
\end{align*}
Substitute the bound
$\max\{||C-C'||,||R-R'||\}\le ||M-M'||\le \epsilon$.
\end{proof}

\subsection{Estimation of the norm  $||U-U'||$}\label{spstr1}

 
  

\begin{lemma}\label{leprtpsdinv}   
Under the assumptions of 
Lemma \ref{thwc}
 we have
$$
||U - U'|| \le 2\zeta~||U||~||U'||~\epsilon~
  {\rm for}~\zeta~{\rm of}~(\ref{eqzt}).$$ 
\end{lemma}

\begin{proof} 
Recall that $\rank(G') = \rank(M') = \rank(G_{\rho}) = \rho$ and that $$||G' - G_{\rho}|| \le ||G' - G|| + ||G - G_{\rho}|| \le ||M' - M|| + \sigma_{\rho + 1}(M) \le 2\epsilon.$$ Then apply \cite[Thm. 2.2.5]{B15} for $A=G,~B=G'$. 
\end{proof}



\begin{lemma}\label{leprtinv}
Under the assumptions of Lemma \ref{leprtpsdinv}
let (\ref{eqtheps1}) hold. Then
 $||U'||\le \frac{||U||}{1-\theta}$.
\end{lemma}

\begin{proof}
Thm. \ref{thsngr} implies that  
$$\sigma_{\rho}(G') \ge \sigma_{\rho}(G)-\epsilon=(1-\epsilon/\sigma_{\rho}(G))\sigma_{\rho}(G)=(1-\epsilon~||U||)\sigma_{\rho}(G).$$   
Substitute (\ref{eqtheps1}) and obtain that 
$\sigma_{\rho}(G') \ge
(1-\theta)\sigma_{\rho}(G)>0$.
 Hence 
$$||U'||=\frac {1}{\sigma_{\rho}(G')}\le \frac{1}{1-\theta}\cdot \frac {1}{\sigma_{\rho}(G)} = \frac{||U||}{1 - \theta}.$$  
\end{proof}

\begin{corollary}
\label{couu'}
Under the assumptions of Lemma \ref{leprtpsdinv}
it holds that
$
||U - U'|| \le \frac{2\zeta}{1-\theta}~||U||^2~\epsilon$.
\end{corollary}

\subsection{Proof of Thm. \ref{thm:error_estimate}}

Combine bound (\ref{eqwc}) and  Lemma \ref{thwc}  and 
obtain
\begin{equation}\label{eqmcur}
||M-C U R||\le \epsilon + (||R||+||C'||)||U||\epsilon+||C'||~||R'||~||U-U'||.
\end{equation}

Recall (\ref{eqmm'}) and obtain
$||C'||\le ||C|| + \epsilon$. 

Combine  (\ref{eqtheps1})
and Cor.    
\ref{couu'}  and deduce that $$||C'||~||U-U'||\le (||C||+\epsilon) \frac{2\zeta}{1-\theta}~||U||^2\epsilon<\frac{2\zeta}{1-\theta}(||C||~|U||^2+||U||)\epsilon.$$ 
Hence
$$||C'||~||R'||~||U-U'||<\frac{2\zeta}{1-\theta}(||R'||~||C||~||U||^2+||R'||~||U||)\epsilon.$$
Substitute  relationships  (\ref{eqv}) and  obtain
\begin{equation}\label{eqcru}
||C'||~||R'||~||U-U'||<
\frac{2\zeta}{1-\theta}(v+1)^2
\epsilon.
\end{equation}

Likewise, deduce from the bounds $||C'||\le ||C|| + \epsilon$ and 
 $\epsilon||U||<1$ that 
$$(||R||+||C'||)||U||\le 
(||R||+||C||+\epsilon)||U||< (||R||+||C||)||U||+1.$$

Recall the bound on $v$ from
(\ref{eqv})
and obtain that
$(||R||+||C'||)||U||<2v+1$.

 Combine this bound with Eqns. (\ref{eqmcur})  and (\ref{eqcru})
 and  obtain the theorem.



  
\section{The errors of CUR LRA of a perturbed factor-Gaussian matrix}\label{serrrnd}

Next  assume that
a matrix $M$ is close to  a two-sided factor-Gaussian 
 matrix $M'$ of low rank
$\rho$  and then strengthen the
estimate
of Thm. \ref{thm:error_estimate}
 by proving  bound (\ref{eqerrnrm}).  
  
  
Begin with auxiliary results.
Consider a matrix $M'$ 
of (\ref{eqn:fgdfn})
for $H_1$, $H_2$, and $\Sigma$ 
of Def. \ref{deffctrg1} and
for  $m,n,k,l$, and $\rho$ satisfying
\begin{equation}\label{eqn:fgpreq}
    k < m,~l < n,~\textrm{and}~ \min\{k, l\}\ge \rho + 2 \ge 4. 
\end{equation}
As in (\ref{eqexpct})
write $e:=2.7182818\dots$ and define parameters 
$\alpha$ and $\beta$ 
as follows:
\begin{equation}\label{eqn:alpbeta}
    \alpha := \frac{e^2\sqrt{kl}}{(k-\rho)(l-\rho)}
    ~\textrm{and}~ 
    \beta := \max\big\{ (\sqrt{m} + \sqrt{\rho})(\sqrt{\rho} + \sqrt{l}), (\sqrt{n} + \sqrt{\rho})(\sqrt{\rho} + \sqrt{k}) \big\}.
\end{equation}


 

\begin{theorem}\label{thfgcur}  
 For two Gaussian matrices $H_1$ and $H_2$,
 matrices $\Sigma$ and $M'$ 
of (\ref{eqn:fgdfn}),    five integers $k, l, m, n, \rho$  satisfying  (\ref{eqn:fgpreq}), and 
parameters $\alpha$ and $\beta$ of (\ref{eqn:alpbeta}), let $\mathcal I$ and $\mathcal J$ be
two fixed
 row and column 
index sets, respectively,  such that
$$C'=M'_{:,\mathcal J},~ 
R'=M'_{\mathcal I,:},~
G'= M'_{\mathcal I,\mathcal J}\in \mathbb R^{k\times l}, ~{\rm and}~U'=G'^+.$$ 
Then $C'$,  $R'$, and $G'$ are factor-Gaussian matrices such that
\begin{eqnarray*}
    \mathbb{E} ||C'|| \le (\sqrt{m} + \sqrt{\rho})(\sqrt{\rho} + \sqrt{l})\sigma_1(\Sigma)\le \beta\sigma_1(\Sigma),~~~~~~~~~ \\
    \mathbb{E} ||R'|| \le (\sqrt{n} + \sqrt{\rho})(\sqrt{\rho} + \sqrt{k})\sigma_1(\Sigma)\le \beta\sigma_1(\Sigma),~~~~~~~~~\\
    \mathbb{E}||U'|| 
   \le \frac{e^2\sqrt{kl}}{(k-\rho)(l-\rho)\sigma_\rho(\Sigma)}\le \frac{\alpha}{\sigma_\rho( 
    \Sigma)}  ~~~~~~~~~~~~~~~~~~~~~
    \end{eqnarray*}
~{\rm for}~ $e:=2.7182818\dots$~{\rm of ~(\ref{eqexpct}).}
\end{theorem}
\begin{proof}
 Since $C' = M'_{:, \mathcal{J}} =  H_1\Sigma{H_2}_{:, \mathcal{J}}$ and
 $H_1$ and $H_2$ are independent
 of one another,
 obtain
\begin{align*}
    \mathbb{E} ||C'|| &\le \mathbb{E} \big( ||H_1||~||\Sigma||~||{H_2}_{:,\mathcal{J}}||\big)\\
    &= \sigma_1(\Sigma) \mathbb{E}||H_1|| ~ \mathbb{E}||{H_2}_{:,\mathcal{J}}||.
\end{align*}
Recall from Thm. \ref{thsignorm} that $\mathbb{E}(\nu_{p, q}) \le \sqrt{p} + \sqrt{q}$
and obtain
\begin{equation*}
    \mathbb{E} ||C'|| \le (\sqrt{m} + \sqrt{\rho})(\sqrt{\rho} + \sqrt{l})\sigma_1(\Sigma).
\end{equation*}
 One can 
similarly  estimate
$\mathbb{E}||R'||$.

Next apply Lemma \ref{lehg},
recall from Thm. \ref{thsiguna} that 
$$\mathbb{E}(\nu^+_{p, q}) \le \frac{e\sqrt{p}}{p - q},~{\rm for}~p \ge q+2 \ge 2,$$  and deduce that 
\begin{align*}
\mathbb{E}||U'|| &= \mathbb{E}||({H_1}_{\mathcal{I}, :}\Sigma{H_2}_{:, \mathcal{J}})^+||\\
&\le \mathbb{E}||{H_1}_{\mathcal{I}, :}^+||~\mathbb{E}||{H_2}_{:, \mathcal{J}}^+||~/\sigma_{\rho}(\Sigma)   \\
&\le \frac{e^2\sqrt{kl}}{(k-\rho)(l-\rho)\sigma_\rho(\Sigma)}= \frac{\alpha}{\sigma_\rho(\Sigma)}.
\end{align*}

\end{proof}
 
 
Now  
fix two  
 sets $\mathcal{I}$ and $\mathcal{J}$ 
of its row and column indexes
of our perturbed two-sided factor-Gaussian matrix $M$ and then
deduce  
that whp  the generator $G = M_{\mathcal{I}, \mathcal{J}}$ is well-conditioned  and bound
the norms  $||C||$ and $||R||$. 
 
\begin{lemma}\label{lemma:factor_gaussian_prob}Under the assumptions of Thm. \ref{thfgcur}, let
\begin{equation}\label{eqgap}
 \sigma_{\rho+1}(M)\le \epsilon \le \frac{\sigma_\rho(\Sigma)}{30\alpha},
 \end{equation} 
let $||E|| \le \epsilon$ and 
write $$M:= M' + E,~C := M_{:, \mathcal{J}},~R:= M_{\mathcal{I}, :},~{\rm and}~U: = (M_{\mathcal{I}, \mathcal{J}})_\rho^+.$$ Then with a probability no less than 0.7 we have 
$$ {\rm (i)}~~~~~~~~~~~~~~~~~~~~~~~~~~~~~~~~~~~~~~~~~||U|| \le \frac{15\alpha}{\sigma_{\rho}(\Sigma)},~~ \epsilon ||U|| \le 1/2, ~~~~~~~~~~~~~~~~~~~~~~~~~~~~~~~$$

$$ {\rm (ii)}~~~~~~~~~~~~~~~~~~~~~~~~~~~
v:=\max\{ ||C||, ||R|| \}||U|| \le 150\alpha\beta\sigma_1(\Sigma)/\sigma_\rho(\Sigma) + 1/2.~~~~~~~~~~~~~~~~~~$$
\end{lemma}

\begin{proof}
(i) By combining the bound of Thm. \ref{thfgcur} on the expected norm
$\mathbb  E||U'||$ with Markov's inequality,  deduce that 
\begin{equation}\label{equ'alph}
 ||U'|| \le \frac{10\alpha}{\sigma_\rho(\Sigma)} \textrm{ or equivalently } \sigma_{\rho}(M'_{\mathcal{I}, \mathcal{J}}) \ge \frac{\sigma_\rho(\Sigma)}{10\alpha}
\end{equation}
with 
a probability no less than 0.9.

Similarly to the argument of Lemma \ref{leprtinv},  deduce that $$\sigma_{\rho}(M_{\mathcal{I}, \mathcal{J}}) \ge \sigma_{\rho}(M'_{\mathcal{I}, \mathcal{J}}) - \epsilon \ge \frac{\sigma_\rho(\Sigma)}{15\alpha},$$ and then claim (i) 
follows.\\
(ii) By combining the  bounds of Thm. \ref{thfgcur} on the expected norms  
$\mathbb||C'||$ and $\mathbb||R'||$ with
 Markov's inequality and the  bound on  the
 probability of the union of random variables, 
 obtain that 
$$
\max\big\{||C||,||R||\big\} \le 10\beta\sigma_{1}(\Sigma) + \epsilon
$$
with 
  a
 probability no less than 0.8.
Then combine claim (i) with the union bound
and obtain that
\begin{equation*}
    \max\{ ||C||, ||R|| \}||U|| \le 150\alpha\beta\sigma_1(\Sigma)/\sigma_\rho(\Sigma) + 1/2  
\end{equation*}
 with 
a probability no less than 0.7.
\end{proof}

Now fix 
 any generator of CUR LRA of $M$ and estimate 
 $||M-CUR||$. 

\begin{theorem}\label{thm:pfg_error_bound}
If the 
assumptions
of Lemma \ref{lemma:factor_gaussian_prob}
hold for  a perturbed factor-Gaussian matrix $M$, then with a probability no less than 0.6
bound (\ref{eqn:error_estimate})  on 
the approximation error norm $||M - CUR||$ for $\theta\le 1/2$ holds, that is, 
 \begin{equation}\label{eqn:factor_gauss_error_est} 
  ||M - CUR|| \le (     4\zeta (v+3)(v+1) \epsilon,~{\rm for}~4\zeta\le 2+2\sqrt 5,~v = 150\alpha\beta\sigma_1(\Sigma)/\sigma_\rho(\Sigma) + 1/2.
\end{equation}
\end{theorem}

\begin{proof}
Combine  bounds (\ref{eqgap}) 
 and (\ref{equ'alph})  
and conclude that
with a probability $p_0\ge 0.9$, the bounds
 $\sigma_\rho(G)\ge 3\epsilon > \epsilon$
  and hence 
(\ref{eqtheps1})
  hold. Therefore, we can apply Thm. \ref{thm:error_estimate} and obtain
(\ref{eqn:error_estimate}). 
 Replace $\theta$ and $v$ with their upper bounds 1/2 and $ 150\alpha\beta\sigma_1(\Sigma)/\sigma_\rho(\Sigma) + 1/2$,  respectively, which hold with a 
 probability $p_1\ge 0.7$.
 Combine the bounds $p_0\le 0.9$ and $p_1\le 0.7$.
\end{proof}

\begin{remark}\label{remark:factor_gaussian}
Recall from (\ref{eqn:alpbeta}) that 
$$\alpha := \frac{e^2\sqrt{kl}}{(k-\rho)(l-\rho)}~{\rm and}~\beta := \max\big\{ (\sqrt{m} + \sqrt{\rho})(\sqrt{\rho} + \sqrt{l}), (\sqrt{n} + \sqrt{\rho})(\sqrt{\rho} + \sqrt{k}) \big\},$$ make simplifying assumptions that
$\sigma_1(\Sigma)/\sigma_\rho(\Sigma) = O(1),$
 $m \gg k \gg \rho$, $n \gg l \gg \rho$, so that $k-\rho\approx k$ and 
$l-\rho\approx  l$,
 drop the smaller terms of bounds (\ref{eqn:alpbeta})  
  and (\ref{eqn:factor_gauss_error_est}), and obtain  dominant parts   of the estimates for
  $\alpha$, $\beta$, $v$, and $||M-CUR||$
  as follows,
$\alpha\approx \frac{e^2}{\sqrt{kl}}$,   
$\beta\approx
\max\{\sqrt{ml},\sqrt{nk}\}$,
$v=O(\max\{\sqrt{m/k},\sqrt{n/l}\})$, and
(\ref{eqerrnrm})
 holds, that is, $ ||M-CUR||= O\Big(\max\Big\{\frac{m}{k},\frac{n}{l} \Big\}\cdot\epsilon \Big).$
\end{remark}

\section{Gaussian pre-processing
and Generalized Nystr{\"o}m algorithm}\label{spreprmlt}


 
Next we prove that pre-processing with 
Gaussian multipliers $X$ and $Y$ transforms any matrix that admits LRA
into a perturbation of a factor-Gaussian matrix. 

\begin{theorem}\label{thquasi}  
Consider five integers $k$, $l$, $m$, $n$, and  $\rho$
satisfying (\ref{eqklmn}),
an $m\times n$ well-conditioned matrix $M$ 
of rank $\rho$, $k\times m$ and $n\times l$ Gaussian matrices $G$ and $H$, respectively,
and the norms $\nu_{p,q}$
and $\nu_{p,q}^+$ of Def. \ref{defnrm}.
Then 

(i) $GM$ is a left factor-Gaussian matrix of  rank $\rho$ such that
$$||GM||\preceq ||M||~\nu_{k,\rho}~{\rm  
and}~||(GM)^+||\preceq ||M^+||~\nu_{k,\rho}^+,$$

(ii) $MH$ is a right factor-Gaussian matrix of  rank $\rho$ such that
$$||MH|| \preceq ||M||~\nu_{\rho,l}~{\rm 
and}~ 
||(MH)^+||\preceq ||M^+||~\nu_{\rho,l}^+,$$ 
(iii) $GMH$ is a two-sided 
factor-Gaussian matrix of rank 
$\rho$
such that $$||GMH||\preceq ||M||~\nu_{k,\rho}\nu_{\rho,l}~{\rm 
and}~||(GMH)^+||\preceq ||M^+||~\nu_{k,\rho}^+\nu_{\rho,l}^+.$$
\end{theorem}
\begin{proof}
Let $M=S_M\Sigma_MT^*_M$ be SVD
where $\Sigma_M$ is  the diagonal matrix of the singular values of $M$;
it is well-conditioned since so is 
the matrix $M$.
Then $\bar G:=GS_M$ and $\bar H:=T_M^*H$
are Gaussian matrices by virtue of  
 Lemma \ref{lepr3}, which states 
orthogonal invariance of Gaussian matrices: indeed in our case    
$G$ and $H$ are Gaussian, 
  while
$S_M\in \mathbb R^{m\times \rho}$ and $T_M\in \mathbb R^{\rho\times n}$ are orthogonal  matrices. Furthermore,                                                                                                                                                                                                                                                                                                                                                                                                                                                                                                
  
(i) $GM=\bar G\Sigma_MT_M^*= 
\bar G_{\rho}\Sigma_MT_M^*$,

(ii) $MH=S_M\Sigma_M\bar H=
S_M\Sigma_M\bar H_{\rho}$, 

(iii) $GMH=\bar G\Sigma_M\bar H=
\bar G_{\rho}\Sigma_M\bar H_{\rho}$
where  $\rho\le \min\{m,n\}$,    and

(iv) $\bar G_{\rho}=
\bar G\begin{pmatrix}I_{\rho}\\O\end{pmatrix}$,
and 
$\bar H_{\rho}=(I_{\rho}~|~O)\bar H$.

 Combine the latter claims (i)--(iv) with Lemma \ref{lehg}.                                                       \end{proof}

Now successively compute the matrices $GM$, $MH$, and $GMH$  above,  $N:=(GMH_{\rho})^+$, and $\widehat  M:=MHNGM$,
and observe that  this computation is precisely
the numerically stable Generalized Nystr{\"o}m algorithm of \cite{N20}; hence  
$\widehat 
M$ is expected to be a near-optimal rank-$\rho$ approximation of $M$.  


\section{Numerical experiments 
}\label{sexpr}


\subsection{Test overview}

 
Next we present our tests of 
Primitive, Cynical, and C--A algorithms for CUR LRA of both synthetic and real world input matrices. Our tests have confirmed  high efficiency of C-A iterations,  performed at sublinear cost, in accordance to and even beyond 
our formal support. Moreover, their power was impressively strong even without using adaptive techniques of \cite{B00,B11}. The tests even 
showed some potentials of computing accurate at sublinear cost the LRAs
of matrices of large classes 
 by means of Primitive
or Cynical algorithms.

We have performed
 the tests   
 in the Graduate Center of the City University of New York 
by using  MATLAB. In particular we applied its standard normal distribution function ``randn()"  to
generate Gaussian matrices and	 calculated 
 $\epsilon$-ranks of   matrices for 
 $\epsilon=10^{-6}$
 by using the MATLAB's function 
 "rank(-,1e-6)",
 which only counts singular values greater than $10^{-6}$.

 Our tables display the  mean value of
 the
 spectral norm of the relative 
 output error over 1000 runs for every class of inputs
 as well as the standard deviation (std) except where it is indicated otherwise.
      Some numerical experiments were executed  
      with software custom programmed in $C^{++}$ and compiled with LAPACK version 3.6.0 libraries. 
  

\subsection{Input matrices for LRA}\label{ststmtrcs}

   
 We used the following two classes of 
 input matrices $M$ for testing LRA algorithms. 

\medskip

{\bf Class I} (Synthetic inputs):  
  Perturbed $n\times n$
factor-Gaussian matrices with expected 
 rank $r$, that is, matrices $W$ in the form
$$M = G_1 * G_2 + 10^{-10} G_3,$$
for three Gaussian matrices $G_1$
of size $n\times r$, $G_2$ of size $r\times n$,
and  $G_3$
 of size $n\times n$. 
\medskip

{\bf Class II}:  The dense  matrices with smaller ratios of ``$\epsilon$-rank/$n$"  from the built-in test problems in Regularization 
Tools, which came from discretization (based on Galerkin or quadrature methods) of the Fredholm  Integral Equations of the first kind,\footnote{See 
 http://www.math.sjsu.edu/singular/matrices and 
  http://www2.imm.dtu.dk/$\sim$pch/Regutools 
  
For more details see Chapter 4 of the Regularization Tools Manual at \\
  http://www.imm.dtu.dk/$\sim$pcha/Regutools/RTv4manual.pdf } namely
to the following six input classes  from the Database:
 
\medskip

{\em baart:}       Fredholm Integral Equation of the first kind,

{\em shaw:}        one-dimensional image restoration model,
 
{\em gravity:}     1-D gravity surveying model problem,
 




{\em wing}:        problem with a discontinuous
 solution,

{\em foxgood:}     severely ill-posed problem, 
 
{\em inverse Laplace:}   inverse Laplace transformation.
    
\medskip 


\subsection{Four algorithms used}


 In our tests 
we applied and compared the following four algorithms
for computing CUR LRA to input matrices $M$ having
 $\epsilon$-rank $r$:
\begin{itemize}
\item
{\bf Tests 1 (The
Primitive
 algorithm for $k=l=r$):}
Randomly choose two index sets $\mathcal{I}$ and $\mathcal{J}$, both of cardinality $r$, then compute a nucleus 
$U=M_{\mathcal{I}, \mathcal{J}}^{-1}$ and define  CUR LRA
\begin{equation}\label{eqtsts1}
\tilde M:=CUR=M_{:, \mathcal{J}} \cdot  M_{\mathcal{I}, \mathcal{J}}^{-1} \cdot M_{\mathcal{I},\cdot}.
\end{equation} 
%
\item    
{\bf Tests 2 (Five loops of  C--A):}
Randomly choose an  initial row index set 
$\mathcal{I}_0$ of cardinality $r$, then perform five loops of C--A
 (cf. Sec. \ref{scait}) incorporating Alg. 1 of \cite{P00}
 as a subalgorithm  that
  produces $r\times r$ CUR generators.
  At the end compute a nucleus $U$ and define 
CUR LRA  as
in Tests 1.
\end{itemize} 
 
\begin{itemize}
\item
{\bf Tests 3 (A Cynical algorithm for $p=q=4r$ and $k=l=r$):}
Randomly choose a row index set $\mathcal{K}$ and a column index set $\mathcal{L}$,
both of cardinality $4r$, and then apply Algs. 1 and 2 from \cite{P00} in order
to compute a $r\times r$ submatrix $M_{\mathcal{I}, \mathcal{J}}$ of $M_{\mathcal{K}, \mathcal{L}}$. Compute a nucleus and obtain  CUR LRA by applying 
equation (\ref{eqtsts1}).

\item
{\bf Tests 4 (Combination of a single 
 C--A loop with Tests 3):}
Randomly choose a  column index set 
$\mathcal{L}$
of cardinality $4r$; then
perform a single C--A
loop
(made up of  a single horizontal  step and  a single vertical step): First by applying Alg. 1 from \cite{P00} define an index set $\mathcal{K}'$ of cardinality $4r$ and the submatrix  $M_{\mathcal{K}', \mathcal{L}}$ in  $M_{:, \mathcal{L}}$; then by applying this algorithm to matrix  $M_{\mathcal{K}',:}$ find an index set  
$\mathcal{L}'$ of cardinality $4r$ and define  submatrix  $M_{\mathcal{K}', \mathcal{L}'}$  in $M_{\mathcal{K}',:}$. Then proceed as in Tests 3 -- find an $r\times r$ submatrix $M_{\mathcal{I}, \mathcal{J}}$ in $M_{\mathcal{K}', \mathcal{L}'}$ by applying Algs. 1 and 2 from \cite{P00}, compute a nucleus and  CUR LRA.
\end{itemize}


\subsection{CUR LRA of the  matrices of class I}\label{sinteq0}

In the tests of this subsection 
we  
computed CUR LRA of factor-Gaussian matrices of
$\epsilon$-rank $r$, of class I.
 

Table \ref{tb_ranrc} shows the summary statistics of the relative error norm $\frac{||\tilde M-M||}{||M||}$ (in spectral norm) observed over 1000 runs of each of the four tests for $n =256, 512, 1024$ and $r = 8, 16, 32$. We also display the relative error norm for the best rank-$r$ approximation given by the $(r+1)$-st largest singular value to establish the baseline for the performance of our tests.  The results of Tests 1 fall in the range $[10^{-8},10^{-7}]$. The other tests show results in $[10^{-11},10^{-10}]$ with Tests 2, Tests 4, and Tests 3 in the decreasing order of accuracy. 
The column SVD of the table
displays optimal SVD-based error estimates.
On the average the error bounds of Tests 2, 4, 3, and 1 exceed  this baseline
bounds by roughly factors of 10,
15, 29, and over 10,000, respectively,  but even  the crudest CRAs output in  Test 1  were  accurate enough for some applications and for the initialization of the LRA refinement by  means of the algorithms of
\cite[Sec. 6]{TYUC17},
 \cite{TYUC19,PLa,LPSa}.
\setlength\tabcolsep{4 pt}
\renewcommand{\arraystretch}{1.2}
\begin{table}[h!]
\small
\begin{center}
\begin{tabular}{|c c|c c|c c|c c|c c|c c|}
\hline     
 &  &\multicolumn{2}{c|}{\bf SVD} &\multicolumn{2}{c|}{\bf Tests 1} & \multicolumn{2}{c|}{\bf Tests 2}&\multicolumn{2}{c|}{\bf Tests 3} & \multicolumn{2}{c|}{\bf Tests 4}\\ \hline
{\bf n} & {\bf r} & {\bf mean} & {\bf std}  &{\bf mean} & {\bf std}  & {\bf mean} & {\bf std}& {\bf mean} & {\bf std} & {\bf mean} & {\bf std}\\ \hline
256 & 8     & 1.01e-11 & 3.92e-13 & 1.60e-08 & 8.65e-08 & 5.94e-11 & 8.06e-12 & 1.13e-10 & 2.36e-11 & 8.23e-11 & 1.64e-11 \\ \hline
256 & 16   & 9.12e-12 & 3.09e-13 & 2.44e-07 & 5.85e-06 & 7.31e-11 & 1.02e-11 & 1.12e-10 & 2.02e-11 & 9.45e-11 & 1.51e-11 \\ \hline
256 & 32   & 7.80e-12 & 2.32e-13 & 4.82e-08 & 2.50e-07 & 8.93e-11 & 1.10e-11 & 1.13e-10 & 1.69e-11 & 1.04e-10 & 1.50e-11 \\ \hline
512 & 8     & 7.64e-12 & 2.23e-13 & 3.50e-08 & 2.72e-07 & 5.71e-11 & 7.08e-12 & 1.21e-10 & 2.67e-11 & 8.34e-11 & 1.62e-11 \\ \hline
512 & 16   & 7.06e-12 & 1.68e-13 & 1.18e-07 & 2.53e-06 & 7.08e-11 & 8.96e-12 & 1.26e-10 & 2.21e-11 & 9.98e-11 & 1.59e-11 \\ \hline
512 & 32   & 6.36e-12 & 1.37e-13 & 7.43e-08 & 7.47e-07 & 9.25e-11 & 1.14e-11 & 1.34e-10 & 1.92e-11 & 1.20e-10 & 1.73e-11 \\ \hline
1024 & 8   & 5.63e-12 & 1.13e-13 & 2.42e-08 & 2.46e-07 & 5.39e-11 & 5.89e-12 & 1.28e-10 & 2.83e-11 & 8.10e-11 & 1.55e-11 \\ \hline
1024 & 16 & 5.34e-12 & 9.23e-14 & 6.12e-08 & 6.91e-07 & 6.94e-11 & 7.68e-12 & 1.37e-10 & 2.35e-11 & 1.04e-10 & 1.73e-11 \\ \hline
1024 & 32 & 4.95e-12 & 7.55e-14 & 6.20e-07 & 1.36e-05 & 9.17e-11 & 1.06e-11 & 1.51e-10 & 2.09e-11 & 1.29e-10 & 1.87e-11 \\ \hline
\end{tabular}
\caption{Errors of CUR LRA of random matrices of class I}
\label{tb_ranrc}
\end{center}
\end{table}
 \renewcommand{\arraystretch}{1}
 \setlength\tabcolsep{6pt}

\subsection{LRA by means of random sampling
and C-A acceleration}
\label{ststc-alvrg}

 
 C-A  iterations can be viewed as a specialization of {\em Alternating Directions Implicit (ADI)}  method to  LRA.

Whp the randomized The algorithms of  \cite{DMM08}, run at
superlinear cost and are expected to output an LRA within any fixed positive relative error norm  bound $\epsilon$. 

 Next we cover our tests both for the randomized algorithm of \cite{DMM08}, 
 and its  combination 
  with C-A iterations.
 Tables  
 \ref{tabcadmm1} 
 and    
 \ref{tabcadmm}   
  display the  relative errors
  $ \frac{\|M - \tilde M \|}{\|M \|}$ of the LRA $\tilde M$  of  the matrices $M$ 
computed  in two ways:   by means of
  \cite[Alg. 2]{DMM08} (see the lines marked ``CUR")
 or eight C-A iterations with \cite[Alg. 1]{DMM08}  applied  at all vertical and horizontal steps
 (see the lines marked ``C-A"). The overall cost of performing the algorithms is superlinear in the former case and sublinear in the latter case.
 In almost all cases this dramatic acceleration was 
 achieved  at the price of only minor deterioration of output accuracy. 
 
  The columns of the tables marked with "$\epsilon$-rank" display  $\epsilon$-rank of an input matrix.
 The columns of the tables  marked with "$k=l$" show the number of rows and  columns in a square matrix of CUR generator. 


\begin{table}[ht]
\centering
\begin{tabular}{|c|c|c|c|c|c|c|c|}
\hline
input & algorithm & m & n & $\epsilon$-rank & k=l & mean & std \\ \hline
finite diff & C-A & 608 & 1200 & 94 & 376  & 6.74e-05 & 2.16e-05 \\ \hline
finite diff & CUR & 608 & 1200 & 94 & 376  & 6.68e-05 & 2.27e-05 \\ \hline
finite diff & C-A & 608 & 1200 & 94 & 188  & 1.42e-02 & 6.03e-02 \\ \hline
finite diff & CUR & 608 & 1200 & 94 & 188  & 1.95e-03 & 5.07e-03 \\ \hline

baart & C-A & 1000 & 1000 & 6 & 24 & 2.17e-03 & 6.46e-04 \\ \hline
baart & CUR & 1000 & 1000 & 6 & 24 & 1.98e-03 & 5.88e-04 \\ \hline
baart & C-A & 1000 & 1000 & 6 & 12 & 2.05e-03 & 1.71e-03 \\ \hline
baart & CUR & 1000 & 1000 & 6 & 12 & 1.26e-03 & 8.31e-04 \\ \hline
baart & C-A & 1000 & 1000 & 6 & 6  & 6.69e-05 & 2.72e-04 \\ \hline
baart & CUR & 1000 & 1000 & 6 & 6  & 9.33e-06 & 1.85e-05 \\ \hline

shaw & C-A & 1000 & 1000 & 12 & 48 & 7.16e-05 & 5.42e-05 \\ \hline
shaw & CUR & 1000 & 1000 & 12 & 48 & 5.73e-05 & 2.09e-05 \\ \hline
shaw & C-A & 1000 & 1000 & 12 & 24 & 6.11e-04 & 7.29e-04 \\ \hline
shaw & CUR & 1000 & 1000 & 12 & 24 & 2.62e-04 & 3.21e-04 \\ \hline
shaw & C-A & 1000 & 1000 & 12 & 12 & 6.13e-03 & 3.72e-02 \\ \hline
shaw & CUR & 1000 & 1000 & 12 & 12 & 2.22e-04 & 3.96e-04 \\ \hline
\end{tabular}
\caption{LRA errors of Cross-Approximation (C-A)   incorporating 
\cite[Algorithm 1]{DMM08} in comparison to stand-alone CUR of 
\cite[Algorithm 2]{DMM08}.}
\label{tabcadmm1}
\end{table} 

\begin{table}[ht]
\centering
\begin{tabular}{|c|c|c|c|c|c|c|}
\hline
input & algorithm & m = n & $\epsilon$-rank & $k=l$ & mean & std \\ \hline
foxgood & C-A & 1000  & 10 & 40 & 3.05e-04 & 2.21e-04 \\\hline
foxgood & CUR & 1000  & 10 & 40 & 2.39e-04 & 1.92e-04 \\ \hline
foxgood & C-A & 1000  & 10 & 20 & 1.11e-02 & 4.28e-02 \\ \hline
foxgood & CUR & 1000 & 10 & 20 & 1.87e-04 & 4.62e-04 \\ \hline
wing & C-A & 1000  & 4 & 16 & 3.51e-04 & 7.76e-04 \\ \hline
wing & CUR & 1000  & 4 & 16 & 2.47e-04 & 6.12e-04 \\ \hline
wing & C-A & 1000  & 4 & 8  & 8.17e-04 & 1.82e-03 \\ \hline
wing & CUR &  1000 & 4 & 8  & 2.43e-04 & 6.94e-04 \\ \hline
wing & C-A &  1000 & 4 & 4  & 5.81e-05 & 1.28e-04 \\ \hline
wing & CUR &  1000 & 4 & 4  & 1.48e-05 & 1.40e-05 \\ \hline

gravity & C-A &  1000 & 25 & 100 & 1.14e-04 & 3.68e-05 \\ \hline
gravity & CUR &  1000 & 25 & 100 & 1.41e-04 & 4.07e-05 \\ \hline
gravity & C-A &  1000 & 25 & 50  & 7.86e-04 & 4.97e-03 \\ \hline
gravity & CUR &  1000 & 25 & 50  & 2.22e-04 & 1.28e-04 \\ \hline
inverse Laplace & C-A &  1000 & 25 & 100 & 4.15e-04 & 1.91e-03 \\ \hline
inverse Laplace & CUR &  1000 & 25 & 100 & 5.54e-05 & 2.68e-05 \\ \hline
inverse Laplace & C-A &  1000 & 25 & 50  & 3.67e-01 & 2.67e+00 \\ \hline
inverse Laplace & CUR &  1000 & 25 & 50 &  2.35e-02 & 1.71e-01 \\ \hline
\end{tabular}
\caption{LRA errors of Cross-Approximation (C-A)   incorporating 
\cite[Algorithm 1]{DMM08}  in comparison to stand-alone CUR of 
\cite[Algorithm 2]{DMM08}.} \label{tabcadmm}
\end{table}


 
  





\clearpage


{\bf \Large Appendix} 
\appendix



\section{Small families of hard inputs for 
superfast LRA}\label{shrdin}

  Any sublinear cost LRA algorithm
  fails on the following small families of LRA inputs.
  
\begin{example}\label{exdlt} 
 Let  $\Delta_{i,j}$ denote an $m\times n$ matrix
 of rank 1  filled with 0s except for its $(i,j)$th entry filled with 1. The $mn$ such matrices $\{\Delta_{i,j}\}_{i,j=1}^{m,n}$ form a family of  $\delta$-{\em matrices}.
We also include the $m\times n$ null matrix $O_{m,n}$
filled with 0s  into this family.
Now fix any sublinear cost  algorithm; it does not access the $(i,j)$th  
entry of its input matrices  for some pair of $i$ and $j$. Therefore, it outputs the same approximation 
of the matrices $\Delta_{i,j}$ and $O_{m,n}$,
with an undetected  error at least 1/2.
Arrive at the same conclusion by applying the same argument to the
set of $mn+1$ small-norm perturbations of 
the matrices of the above family and to the                                                                                                                                   
 $mn+1$ sums  
of the latter matrices with  any
   fixed $m\times n$ matrix of low rank.
Finally, the same argument shows that 
a posteriori estimation of the output errors of an LRA algorithm
applied to the same input families
cannot run at  sublinear cost. 
\end{example}

The example actually covers randomized LRA algorithms as well. Indeed, suppose that an LRA algorithm does not involve 
 a fixed entry of an input matrix with a  probability $p>0$.
Apply this algorithm to two matrices
of low rank whose difference at  this entry is equal to a large constant $C$.   
Then
with a  
probability $p$ the algorithm  
has error at least $C/2$  at this entry for a least one of these two matrices. 

\section{Abridged SRHT  matrices}\label{ssrht}

  
With  {\em sparse subspace embedding} \cite{C16,CDDRa,CFSa},
\cite[Sec. 3.3]{TYUC19},
\cite[Sec. 9]{MT20} one obtains significant acceleration but still does not yield superfast algorithms. 
According to \cite{L09},
 such acceleration tends to
 make the accuracy of output LRAs somewhat less reliable, although \cite{CFSa} partly overcomes
 this  problem for incoherent matrices. One can  multiply a  matrix by Subsampled Randomized Hadamard or Fourier Transform ({\em SRHT or SRFT)} dense matrices
 towards  incoherence \cite{CFSa}, but this step is not superfast.

For a compromise, we devise {\em Abridged SRHT multipliers}. They are  sparse, can  be multiplied by a dense matrix at sublinear cost, and the paper \cite{PLSZb} 
studied  their
application to LRA,  both formally and empirically,
and similarly defined and studied Abridged SRFT multipliers.

We  proceed
  by  means of abridging 
    the classical 
recursive processes of the generation  of $n\times n$ 
SRHT  matrices,
  obtained from   the  $n\times n$ dense
matrices $H_n$ of  Walsh-Hadamard transform for $n=2^t$
(cf. \cite[Sec. 3.1]{M11}).
The $n\times n$ matrices $H_n$ are obtained in $t=\log_2(n)$ recursive  steps, but we 
 only perform $d\ll t$
 steps, and the resulting 
 abridged matrix $H_{d,d}$ can be multiplied by a vector by using $2dn$   additions and subtractions.
 SRHT matrices are obtained
 from the matrices $H_n$ by means of random sampling and scaling, which we also apply to the $d$-Abridged Hadamard  transform matrices  $H_{d,d}$. They turn into $H_n$ for $d=t$ but are sparse for $d\ll t$. Namely, we
   write $H_{d,0}:=I_{n/2^d}$ and then
specify the following recursive process: 
 

\begin{equation}\label{eqrfd}
H_{d,0}:=I_{n/2^d},~
H_{d,i+1}:=\begin{pmatrix}
H_{d,i} & H_{d,i} \\
H_{d,i} & -H_{d,i}
  \end{pmatrix}
  ~{\rm for}~i=0,1,\dots,d-1, 
\end{equation}

For any fixed pair of $d$ and $i$, 
each of the matrices  
 $H_{d,i}$ 
is orthogonal 
up to scaling and
 has $2^d$ nonzero entries 
in every row and  column;
  we can compute the product 
 $MH$ for an   $n\times k$ submatrix $H$ of $H_{d,d}$ by using less than $km2^d$ additions and subtractions.


Now define the 
$d$-Abridged Scaled and Permuted 
 Hadamard matrices, $PDH_{d,d}$, where $P$ is a
 random 
 sampling 
 matrix and  $D$ is the matrix of  
random integer diagonal scaling.
Each random permutation or scaling  
 contributes up to $n$ random parameters.  
We can involve more random parameters by applying random permutation and scaling 
also to some or all
 intermediate matrices $H_{d,i}$ for $i=0,1,\dots,d$.

The first $k$ columns of $H_{d,d}$  for  
 $r\le k\le n$ form a
 $d$-Abridged  
 SRHT matrix $H$, which turns into
an SRHT matrix for $d=t$, where $k=r+p$, $r$ is a target rank and $p$ is the oversampling parameter
 (cf. \cite[Sec. 11]{HMT11}).

\section{Volume of a factor-Gaussian matrix}\label{svlmfct}


In this section we assume dealing with a random input matrixces having the distribution of a  two-sided factor-Gaussian matrix with expected rank $r$ and then estimate its
$r$-projective volume.  We can  extend this estimate to its sufficiently small neighborhood. 
 
For an $m\times r$ Gaussian matrix $G$,  $r \le m$, and $r$ independent $\chi^2$ random variables $\chi^2_{m-i+1}$ with $m-i+1$ degrees of freedom,
  $i=1,\dots,r$, recall  that
   $$ v_2(G)^2 
   \sim \prod_{i=1}^{r} \chi^2_{m-i+1}.$$
  
  Recall two auxiliary results,  about  concentration of $\chi^2$ random variables and about  statistical order of $r\cdot{\rm vol}(G)^{1/r}$ and $\chi^2$ random variables with appropriate degrees of freedom, respectively.
 
\begin{lemma}[adapted from {\cite[Lemma 1]{LM00}}]\label{lemma:laurent} Let
 $Z\sim \chi^2_k$ and let $r$ be an integer. Then
   $$ \prob\Big\{ \frac{Z}{r} \ge 1 + \theta \Big\} \le \exp\Big(-\frac{\theta r}{4}\Big)~
{\rm for~any}~\theta > 4;$$ 
$$    \prob\Big\{ \frac{Z}{r} \le 1 - \phi \Big\} \le 
    \exp\Big(-\frac{\phi^2 r}{4}\Big)~
{\rm for~any}~\phi > 0.$$
\end{lemma}

\begin{theorem}{\rm\cite[Theorem 2]{MZ08}.}\label{thm:magen}
Let $m \ge r \ge 2$ and let $G$ be an $m\times r$ Gaussian matrix. Then 
    $$\chi^2_{r(m-r+1) + \frac{(r-1)(r-2)}{2}} \succeq r{ v}_2(G)^{2/r} \succeq \chi^2_{r(m-r+1)}$$
greater than or equal to $B$.
\end{theorem}

Next  estimate the volume of a  Gaussian matrix based on the above results.


\begin{lemma}\label{lemma:gau_vol_con}
Let $G$ be an $m\times r$ Gaussian matrix for $m \ge r \ge 2$. Then 
   $$ \prob\big\{ 
   v_{2,r}(G) \ge (1+\theta)^{r/2} (m-r/2)^{r/2}
    \big\} \le \exp{\big(-\frac{\theta}{4}(mr-\frac{r^2}{2} - \frac{r}{2} + 1) \big)}
~{\rm for}~\theta > 4;$$
$$    \prob\big\{ 
v_{2,r}(G) \le (1-\phi)^{r/2} (m-r+1)^{r/2}
    \big\} 
    \le\exp{\Big(-\frac{\phi^2}{4}r(m-r+1) 
    \Big)}~{\rm for}~\phi > 0.$$
\end{lemma}

\begin{proof}
Combine Lemma \ref{lemma:laurent} and Theorem \ref{thm:magen}.
\end{proof} 

We also need the following result
of \cite{OZ18}.


\begin{theorem}\label{thvolfctrg} 
For $G\in \mathbb  R^{m\times q}$,  $H\in \mathbb  R^{q\times n}$,  and $1\le r\le q$, it holds that
 $v_{2,r}(GH)\le v_{2,r}(G)v_{2,r}(H)$. 
\end{theorem}

Next assume  that  $\min\{m, n\big\} \gg r$ and that $p$ and $q$ are two sufficiently large integers and  then  prove that the volume of any fixed $p\times q$ submatrix of an $m\times n$  two-sided
factor-Gaussian matrix with expected rank $r$  has a reasonably large lower bound
whp. 
\begin{theorem}\label{thm:gau_vol_large}
Let $W = G\Sigma H$ be an $m\times n$  two-sided factor-Gaussian matrix with expected rank  $r \le \min\{m, n\}$. Let $\mathcal I$ and $\mathcal J$ be row and column index sets such that $|\mathcal I| = p \ge r$ and $|\mathcal J| = q \ge r$. Let $\phi$ be a positive number.  Then 
 $$   
v_{2,r}{\big( W_{\mathcal I, \mathcal J}\big)} \ge (1-\phi)^r(p-r+1)^{r/2}(q-r+1)^{r/2}
v_{2,r}\big(\Sigma\big) $$
with a probability no less than $1 - \exp{\big(\frac{\phi^2}{4}r(p-r+1)\big)} - \exp{\big(\frac{\phi^2}{4}r(q-r+1)\big)}$.
\end{theorem}

\begin{proof}
Recall Theorem \ref{thvolfctrg} and obtain
  $$  
  v_{2,r}{\big(W_{\mathcal I, \mathcal J}\big)} = 
  v_{2,r}(G_{\mathcal I, :})
v_{2,r}(\Sigma)
  v_{2,r}(H_{:, \mathcal J}),$$
where $G_{\mathcal I, :}$ and $H_{:, \mathcal J}$ are independent Gaussian matrices. Complete the proof by 
applying Lemma \ref{lemma:gau_vol_con} and the Union Bound. 

\end{proof}


Extend this theorem by estimating the volume of a  two-sided factor-Gaussian matrix.

Due to the  volume concentration of a Gaussian matrix,  
it is unlikely that the maximum volume of a matrix
 in a set of
 moderate number of Gaussian matrices 
greatly exceeds the volume 
of a fixed matrix in this set. Based on this observation, we arrive at weak  maximization 
of the volume of any fixed submatrix of a   two-sided factor-Gaussian matrix.

\begin{lemma}\label{lemma:gau_vol_maximal}
Let $G_1, G_2, \dots, G_M$ be a collection of $M$ Gaussian matrices of size $m\times r$, for $m \ge r$. Then 
whp specified in the proof we have
\begin{equation}\label{ineq:gau_vol_maximal}
    \frac{\max_{1\le i\le M} \big(
    v_{2,r}(G_i)\big)}{
    v_{2,r}(G_1)} \le \Big( \frac{(1+\theta)(1+r/m)}{1-\phi} \Big)^{r/2}.
\end{equation}
\end{lemma}

\begin{proof}
Write $V_{max} := \max_{1\le i\le M} \big( 
v_{2,r}(G_i)\big)$. Combine Lemma \ref{lemma:gau_vol_con} and the Union Bound  to obtain 
  $$  \prob \big\{  V_{max}
    \ge  (1+\theta)^{r/2} (m-r/2)^{r/2}\big\}
    \le M\cdot\exp{\Big(-\frac{\theta}{4}\Big(mr-\frac{r^2}{2} - \frac{r}{2} + 1\Big) \Big)} $$
for $\theta > 4$.
Moreover,  
 $$   \prob\big\{
 v_{2,r}(G_1) \le (1-\phi)^{r/2} (m-r+1)^{r/2}
    \big\} 
    \le\exp{\big(-\frac{\phi^2}{4}r(m-r+1) 
    \big)} $$
for $\phi > 0$. 
Now assume that 
 $m > 2r$ and  readily deduce that
$$\frac{m - r/2}{ m - r + 1} < 1 + \frac{r}{m}.$$
Combine these results and obtain that inequality (\ref{ineq:gau_vol_maximal}) holds
with a probability no less than \\ $1 - M\cdot\exp{\big(-\frac{\theta}{4}(mr-\frac{r^2}{2} - \frac{r}{2} + 1) \big)} - \exp{\big(-\frac{\phi^2}{4}r(m-r+1)\big)}$.
\end{proof}

\begin{remark}\label{remark:prob_ineq}
The exponent  $r/2$ in the volume ratio may be disturbing but is natural because $
v_{2,r}(G_i)$ is essentially the volume of an $r$-dimensional parallelepiped, and difference in each dimension will contribute to the difference in the volume. The impact of factor $M$ 
on the probability estimates can be mitigated with parameter $m$, that is, the probability is high and even close to 1 if $m$ is set sufficiently large. Namely, let 
\begin{equation*} 
    m \ge 1 + r + \frac{4\ln M}{r\theta}.
\end{equation*}
Then  we  readily deduce that
\begin{equation*}
    M\cdot\exp\Big(
    -\frac{\theta}{4}\Big(mr-\frac{r^2}{2} - \frac{r}{2} + 1\Big)
    \Big) < \exp \Big(-\frac{\theta}{4}r\Big);
\end{equation*}

\begin{equation*}
    \exp{\Big(-\frac{\phi^2}{4}r(m-r+1)\Big)} < \exp\Big( -\frac{\phi^2}{2}r\Big). 
\end{equation*}
\end{remark}

\begin{theorem}\label{thm:factor_gau_locally_max}
Let $W = G\Sigma H $ be an $m\times n$ two-sided factor-Gaussian matrix with expected rank $r$ and let $r < \min\big(m, n\big)$. Let $\mathcal I$ and $\mathcal J$ be row and column index sets such that 
$|\mathcal I| = p > 2r$ and 
$|\mathcal J| = q > 2r$. Let $\theta > 4$ and $\phi > 0$ be two parameters, and further assume that 
$p \ge 1 + r + \frac{4\ln m^2/4}{r\theta}$ and 
$q \ge 1 + r + \frac{4\ln n^2/4}{r\theta}$. Then 
  the submatrix $W_{\mathcal I, \mathcal J}$ has $\big(\frac{1+\theta}{1-\phi}\big)^r\big(\frac{(p+r)(q+r)}{pq}  \big)^{r/2}$-locally maximal $r$-projective volume
with a probability no less than $1 - 2\exp{\big(-\frac{\theta}{4}r \big)}
-2\exp{\big(-\frac{\phi^2}{2}r\big)}$.
\end{theorem}

\begin{proof}
There are 
$p(m-p) \le m^2/4$ 
submatrices  of $G$ 
of size $p\times r$  that differ from $G_{\mathcal I, :}$ only in a single  row;
 likewise  there are 
$q(n-q) \le n^2/4$ 
submatrices of $H$  of size $q\times r$ that differ from $H_{:, \mathcal J}$ only in a single column. 

Now let $\mathcal I'$ and $\mathcal J'$ be any pair of row and column index sets  that  differ from $\mathcal I$ and $\mathcal J$ by a single index, respectively. 
Then Lemma \ref{lemma:gau_vol_maximal} and Remark \ref{remark:prob_ineq}
together imply that
\begin{equation}\label{ineq:factor_gau_locally_max}
    \frac{
    v_{2,r}(G_{\mathcal I', :})}{
   v_{2,r} (G_{\mathcal I, :})} \le \Big( \frac{(1+\theta)(1+\frac{r}{p})}{1-\phi} \Big)^{r/2}   
    ~\textrm{ and }~
    \frac{v_{2,r}(H_{:, \mathcal J'})}{
    v_{2,r}(H_{:, \mathcal I})} \le \Big( \frac{(1+\theta)(1+\frac{r}{q})}{1-\phi} \Big)^{r/2}     
\end{equation}
with a probability no less than
$1 - 2\exp{\big(-\frac{\theta}{4}r \big)}
-2\exp{\big(-\frac{\phi^2}{2}r\big)}$.

Recall that 
\begin{equation*}
    v_{2,r}\big( W_{\mathcal I, \mathcal J} \big)
    = v_{2,r} \big( G_{\mathcal I, :}\big)
    v_{2,r}(\Sigma)
    v_{2,r} \big(H_{:, \mathcal J}\big),
\end{equation*}
and similarly for $W_{\mathcal I', \mathcal J'}$. Inequality (\ref{ineq:factor_gau_locally_max}) implies that 
\begin{equation*}
    \frac{
    v_{2,r}\big( W_{\mathcal I', \mathcal J'}\big)}{
    v_{2,r}\big( W_{\mathcal I, \mathcal J}\big)} \le \Big(\frac{1+\theta}{1-\phi}\Big)^r\Big(\frac{(p+r)(q+r)}{pq} \Big)^{r/2}.
\end{equation*}
\end{proof} 


 
\medskip


\noindent {\bf Acknowledgements:}
Our work has been supported by NSF Grants 
CCF--1116736,
 CCF--1563942 and CCF--1733834
and PSC CUNY Award  66720-00 54.
We are also grateful to E. E. Tyrtyshnikov for the challenge
of formally supporting empirical power of C--A iterations,
to N. L. Zamarashkin for his comments on his work with A. Osinsky  on LRA via volume maximization and on the first drafts of  \cite{PLSZ17} and the present paper,   and to
  S. A. Goreinov, 
 I. V. Oseledets, A. Osinsky,  E. E. Tyrtyshnikov, and N. L. Zamarashkin for  reprints and pointers to relevant bibliography.

\clearpage


\end{document}